\documentclass[11pt,reqno]{amsart}
\usepackage[a4paper]{anysize}\marginsize{2.5cm}{2.5cm}{2cm}{2cm}
\pdfpagewidth=\paperwidth \pdfpageheight=\paperheight
\allowdisplaybreaks
\usepackage{amsmath, amsthm, amsfonts, amssymb, color}
\usepackage[colorlinks=true,linkcolor=blue,urlcolor=blue]{hyperref}
\usepackage{url}
\usepackage{graphicx}
\usepackage{caption}
\usepackage{mathtools}
\usepackage{enumerate}
\usepackage[dvipsnames]{xcolor}
\usepackage{verbatim}
\usepackage{tikz,tikz-cd,tikz-3dplot}
\usepackage{amssymb}
\usetikzlibrary{matrix}
\usetikzlibrary{arrows}
\usepackage{algorithm}
\usepackage[noend]{algpseudocode}
\usepackage{caption}
\usepackage[normalem]{ulem}
\usepackage{subcaption}
\usepackage{makecell}
\usepackage{array}
\usepackage[scr=rsfs]{mathalpha}
\usepackage{multirow,array}
\usepackage{listings}

\usepackage{cleveref}
 
\usepackage{mathdots}
\usepackage{pdfpages}
\usepackage{booktabs}
\usepackage{mathrsfs,hyperref} 
\usepackage[numbers,sort&compress]{natbib}

\DeclareFontEncoding{LS1}{}{}
\DeclareFontSubstitution{LS1}{stix}{m}{n}
\DeclareSymbolFont{symbols2}{LS1}{stixfrak} {m} {n}
\DeclareMathSymbol{\operp}{\mathbin}{symbols2}{"A8}

\newtheorem{theorem}{Theorem}
\newtheorem{numtheorem}[theorem]{Theorem*}
\newtheorem{proposition}[theorem]{Proposition}
\newtheorem{lemma}[theorem]{Lemma}
\newtheorem{corollary}[theorem]{Corollary}

\theoremstyle{definition}
\newtheorem{definition}[theorem]{Definition}

\newtheorem{remark}[theorem]{Remark}
\newtheorem{conjecture}[theorem]{Conjecture}

\newtheorem{example}[theorem]{Example}

\numberwithin{theorem}{section}

\newcommand{\CC}{\mathbb{C} }

\newcommand{\sign}{\mathrm{sign}}
\newcommand{\codim}{\mathrm{codim}}
\newcommand{\Jac}{\mathrm{Jac}}
\newcommand{\join}{\text{Join}}
\newcommand{\CCdeg}{\operatorname{CCdeg}}
\DeclareMathOperator{\Pf}{\bf Pf}
\DeclareMathOperator{\GL}{GL}
\DeclareMathOperator{\SL}{SL}

\numberwithin{equation}{section}

\newcommand{\FF}[1]{{\color{blue}{\bf FF}: #1}}

\lstdefinelanguage{Julia}%
  {morekeywords={abstract,break,case,catch,const,continue,do,else,elseif,%
      end,export,false,for,function,immutable,import,importall,if,in,%
      macro,module,otherwise,quote,return,switch,true,try,type,typealias,%
      using,while},%
   sensitive=true,%
   morecomment=[l]\#,%
   morecomment=[n]{\#=}{=\#},%
   morestring=[s]{"}{"},%
   morestring=[m]{'}{'},%
}[keywords,comments,strings]%

\title[On the Coupled Cluster Doubles Truncation Variety of Four Electrons]{On the Coupled Cluster Doubles Truncation Variety of Four Electrons}

\author[F. M. Faulstich]{Fabian M. Faulstich}
\address{Rensselaer Polytechnic Institute, 110 Eighth Street, Troy, NY 12180, USA}
\email{faulsf@rpi.edu}

\author[V. Galgano]{Vincenzo Galgano}
\address{Max Planck Institute of Molecular Cell Biology and Genetics, Dresden, Germany; Faculty of Mathematics, Technische Universität Dresden, Germany}
\email{galgano@mpi-cbg.de}

\author[E. Neuhaus]{Elke Neuhaus}
\address{Max Planck Institute for Mathematics in the Sciences, Leipzig, Germany}
\email{elke.neuhaus@mis.mpg.de}

\author[I. Portakal]{Irem Portakal}
\address{Max Planck Institute for Mathematics in the Sciences, Leipzig, Germany}
\email{mail@irem-portakal.de}

\begin{document}

\begin{abstract}
We extend recent algebro-geometric results for coupled cluster theory of quantum many-body systems to the truncation varieties arising from the doubles approximation (CCD), focusing on the first genuinely nonlinear doubles regime of four electrons. Since this doubles truncation variety does not coincide with previously studied varieties, we initiate a systematic investigation of its basic algebro-geometric invariants. Combining theoretical and numerical results, we show that for $4$ electrons on $n\leq 12$ orbitals, the CCD truncation variety is a complete intersection of degree $2^{\binom{n-4}{4}}$. Using representation-theoretic arguments, we uncover a Pfaffian structure governing the quadratic relations that define the truncation variety for any $n$, and show that an exact tensor product factorization holds in a distinguished limit of disconnected doubles. We connect these structural results to the computation of the beryllium insertion into molecular hydrogen ({Be$\cdots$H$_2$ $\to$ H--Be--H}), a small but challenging bond formation process where multiconfigurational effects become pronounced. \\
\hfill\break
\textsc{Keywords.} Complete intersection, Pfaffian, truncation variety, quantum many-body system, coupled cluster theory, strongly correlated systems. \\
\hfill\break
\textsc{MSC codes.} 14M10 (Complete intersections), 15A69
(Multilinear algebra, tensor calculus), 05E10 (Tableaux, representations of the symmetric group), 81-08 (Computational methods for problems pertaining to quantum theory), 14Q15 (Higher-dimensional varieties).
\end{abstract}

\maketitle

\section{Introduction}

\noindent
Computational (quantum) chemistry provides a quantitative link between quantum mechanics and chemical observables by translating microscopic models into algorithms for energies, forces, and derived properties.  A central component is {\it electronic structure simulation}, which turns the many-electron problem into a sequence of large-scale numerical tasks and thereby exposes deep questions in approximation theory, numerical linear and nonlinear algebra~\cite{helgaker2014molecular}.

The fundamental mathematical model is the {\it quantum many-body problem} for electrons moving in an external potential generated by the nuclei. The central quantity is a many-electron wave function, i.e., a function of $3d$ spatial variables (and spin) for $d$ electrons, constrained by fermionic antisymmetry. Even before numerical approximation enters, this combination of high dimensionality and antisymmetry makes the electronic structure problem a prototypically difficult computational task. The high fidelity requirements of electronic structure simulations demand an accurate representation of the nontrivial coupling created by electron--electron interactions in a very high dimensional antisymmetric space, whereas numerical tractability requires advanced techniques that exploit problem-specific structures.

In practice, one introduces a finite representation, which turns the electronic Schr\"odinger equation into a large eigenvalue problem on an antisymmetric tensor space. The dimension of this space grows combinatorially with the number of orbitals; hence, the direct application of ``standard'' numerical approaches quickly becomes infeasible. This scaling barrier motivates working with reduced model classes for the wave function, but these classes are typically nonlinear. The reason is that electron--electron interactions mix a large number of basis configurations, so that achieving a given accuracy with a linear approximation space generally forces the dimension of that space to grow rapidly. Nonlinear parametrizations offer an alternative reduction mechanism by representing the relevant states as a low-dimensional manifold or variety inside the full antisymmetric space, thereby reducing the effective dimension of the problem. From this viewpoint, much of electronic structure theory can be seen as the design and analysis of computationally accessible model classes that capture the essential physics.

One instance of such a nonlinear reduction strategy is provided by {\it coupled cluster theory}, which is highly successful and widely regarded in the computational chemistry community as a benchmark for high-accuracy electronic structure calculations. In this approach, one does not solve the discretized eigenvalue problem directly in the full antisymmetric space. Instead, one introduces a parametrized family of trial states obtained from a fixed reference vector by applying an exponential map to a structured operator, and one determines the parameters by enforcing the eigenvalue equation through a set of projected residual conditions. Practical variants arise by restricting the parametrizing operator to a prescribed subset of degrees of freedom, which yields a smaller but nonlinear system. These restrictions make large scale computations feasible, but they also raise mathematical questions about the solution set of the nonlinear system, such as the presence of multiple solutions, singular points, and the geometry of the feasible set.

An exterior algebra and algebraic geometry viewpoint on coupled cluster theory was recently initiated in ~\cite{faulstich2024coupled} and developed further in ~\cite{faulstich2024algebraic}. A key outcome is the notion of \emph{coupled cluster truncation varieties}, which model the feasible set determined by truncating the exponential parametrization \eqref{eq: exponential parametrization} and the associated unlinked coupled cluster equations~\eqref{eq:truncatedproblem}. A first focused algebraic investigation of truncation varieties was carried out in \cite{borovik2024coupled}, building on the observation that the coupled cluster singles truncation variety (only one-electron excitations are included in the cluster operator) recovers the Grassmannian, a classical and well-understood variety. The present manuscript continues this line of research with an emphasis on the coupled cluster doubles (CCD) truncation variety, where only two-electron excitations are included in the cluster operator. Unlike the singles case, the doubles truncation varieties do not appear to coincide with specifically studied varieties, and their geometry must therefore be analyzed directly.

In Section~\ref{Section 2}, we describe the defining quadrics $P_J$ in \eqref{eq:PJ} and the linear equations of the affine CCD truncation variety $V_{\{2\}}^{\mathbb A}$ for four electrons, expressed in coordinates of $6\binom{n-4}{2}$ quantum states~\eqref{eq: quantum states}. Constructing the defining ideal of the projective CCD variety $V_{\{2\}}$ requires saturating an ideal generated by $\binom{n-4}{4}$ homogenized quadrics $Q_J$ \eqref{homogenized quadrics}, a process that becomes computationally prohibitive for $n>10$ spin orbitals (see Example~\ref{ex: 8 and 9 spin orbitals}). This motivates our algebro-geometric study of the varieties $V_P$ and $V_Q$ defined by the quadrics $P_J$ and $Q_J$, respectively. The variety $V_P$ is a complete intersection if and only if the projective truncation variety $V_{\{2\}}$ is isomorphic to $V_Q$ (Proposition~\ref{ci iff sat}). In this case, the degree of $V_{\{2\}}$ is $2^{\binom{n-4}{4}}$ (Corollary~\ref{cor: degree truncation}). Moreover, we prove that for $n\geq 15$, $V_{\{2\}}$ is not a complete intersection (Proposition~\ref{prop:not CI}). We also prove numerically that $V_{\{2\}}$ is a complete intersection for $n\leq 12$ (Theorem$^*$~\ref{thm: num theorem}) and we discuss in Section~\ref{subsec:numproof} how our numerical proof argument in fact implies that $V_{\{2\}}$ is not a complete intersection for $n\geq 13$. In the former case, the projective truncation variety $V_{\{2\}}$ coincides with the naive homogenization of the affine truncation variety $V_{\{2\}}^{\mathbb A}$.

Section~\ref{sec: quadrics PJs and pfaffians} analyzes the geometry of $V_P$ using representation theory (Section~\ref{subsec:preliminaries modules}). We decompose the space containing the quadrics $P_J$ into irreducible representations of $\mathrm{SL}_{4}\times\mathrm{SL}_{n-4}$ (Section~\ref{subsec: decomposition})  and identify which irreducible components contain the linear span $\langle P_J\mid J\rangle_{\mathbb{C}}$ as a $\mathfrak{S}_4\times\mathfrak{S}_{n-4}$-subrepresentation. This allows us to conclude that the quadrics $P_J$ are linear combinations of tensor products of Pfaffians in Theorem~\ref{thm:Pfaffians}. We also address the natural question whether the quadrics $P_J$ can be pure tensor products of Pfaffians (and not linear combination of them), showing that this happens in the framework of disconnected doubles (see Section \ref{subsec:disconnected}). 

In Section~\ref{sec:ccd}, we conjecture the coupled cluster degree supported by numerical computations (Conjecture~\ref{conj: CC degree}) and compare it to an upper bound computed using \cite[Theorem~5.2]{faulstich2024algebraic}. In Section~\ref{Section 5}, we conclude the paper with a quantum chemistry simulation for four electrons in up to ten spin-orbitals, which is a CAS(2,5) setting. We investigate the beryllium insertion into molecular hydrogen, a challenging system since it involves solution crossings along the pathway~\cite{purvis1983c2v}. This example pushes the algebraic pipeline to a regime where the coupled-cluster polynomial system admits millions of solutions for the corresponding generic Hamiltonian. It also provides a concrete test of how reference crossings reshape the distribution of coupled-cluster roots and the fraction of roots that yield real-valued energies. Moreover, our results motivate studying not only the root structure itself, but also how a distinguished observable, here the energy, organizes the solution set as the system changes. All numerical and symbolic computations in this paper were carried out using \texttt{Julia} and \texttt{Macaulay2}. The corresponding code is accessible in \cite{Faulstich2025SupplementaryCode}.

\subsection{Previous works and perspective}

The first systematic study of the root structure of the CC equations dates back to 1978, when \v{Z}ivkovi\'c and Monkhorst analyzed singularities and multiple solutions in single-reference CC theory~\cite{vzivkovic1978analytic}. In the early 1990s, Paldus and coauthors developed further mathematical and numerical analyses of the solution manifolds of single-reference and state-universal multireference CC equations, including their singularities and analytic properties~\cite{paldus1993application,piecuch1990coupled}. Homotopy methods were revived in the CC context by Kowalski and Jankowski, who solved a CCD (CC doubles) system in~\cite{kowalski1998towards}; Kowalski and Piecuch subsequently extended these ideas to CCSD (CC singles, and doubles), CCSDT (CC singles, doubles, and triples), and CCSDTQ (CC singles, doubles, triples, and quadruples) for a four-electron minimal-basis model~\cite{piecuch2000search}. These works led to the $\beta$-nested equations and the \emph{Fundamental Theorem of the $\beta$-NE Formalism}~\cite{kowalski2000complete2}, which explains how solution branches connect across truncation levels (e.g., CCSD $\to$ CCSDT $\to$ CCSDTQ). The resulting \emph{Kowalski--Piecuch homotopy} has recently been analyzed in depth using topological degree theory~\cite{csirik2023disc,csirik2023coupled}. Related homotopy-based studies produced complete solution sets of the generalized Bloch equation~\cite{kowalski2000complete} and clarified symmetry-breaking mechanisms within CC theory~\cite{kowalski1998full,jankowski1999physical1,jankowski1999physical2,jankowski1999physical3,jankowski1999physical4}. For a recent perspective on homotopy methods in quantum chemistry, see \cite{faulstich2023homotopy}.

A more explicitly algebraic and mathematically driven line of work has recently gained momentum. Initiated by some of the present authors, an algebraic perspective on CC theory was introduced in \cite{faulstich2024coupled}, and subsequent in-depth analysis led to the algebraic framework of CC truncation varieties~\cite{faulstich2024algebraic}. This viewpoint has since motivated a sequence of contributions ranging from pure mathematics~\cite{borovik2024coupled,sverrisdottir2025algebraic,price2026plane} to domain applications targeting the description of excited states~\cite{faulstich2026algebraic} or spin-adapted coupled cluster theory \cite{sverrisdottir2024exploring}.
More broadly, these developments reflect a growing interest in a rigorous mathematical understanding of CC theory within the pure and applied mathematics community. In this context, Schneider gave the first local analysis of CC theory in 2009 using functional-analytic techniques~\cite{schneider2009analysis}; this framework was generalized in~\cite{rohwedder2013continuous,rohwedder2013error} and extended to additional CC variants in~\cite{laestadius2018analysis,laestadius2019coupled,faulstich2019analysis}. A complementary and more flexible approach based on topological degree theory was later proposed by Csirik and Laestadius~\cite{csirik2023disc,csirik2023coupled}. The most recent numerical-analysis results for single-reference CC were obtained by Hassan, Maday, and Wang~\cite{hassan2023analysis,hassan2023analysis2}. For a comprehensive review of the recent mathematical advances, we refer the interested reader to~\cite{faulstich2024recent}. Beyond the direct applications to coupled cluster theory, algebraic and computational approaches find more and more use in electronic structure theory~\cite{gontier2019numerical,cances2024mathematical,pokhilko2025homotopy,harwood2022improving,borovik2025numerical}.

\subsection{The electronic structure problem and coupled-cluster theory}
\label{sec:background}

We briefly review the electronic Schrödinger equation and the coupled cluster formalism to fix notation and ensure a self-contained presentation. For a more detailed exposition, we refer the interested reader to~\cite{helgaker2014molecular}. 
The {\em electronic Schrödinger equation} describing $d$ electrons in $n \geq 2d$ spin-orbitals in an electro-magnetic field is an eigenvalue problem~\cite{schrodinger1926undulatory}
\begin{equation} 
\label{eq:schroedinger}
H \Psi
=
\lambda \Psi,
\end{equation}
where the unknown is the {\em wave function} $\Psi(x_1,x_2,\ldots,x_d)$. 
The \emph{Hamiltonian} $H$ does not admit a closed-form solution beyond the hydrogen atom and requires a finite representation that is amenable to numerical simulations. The solutions to the many-electron Schrödinger equation have to be anti-symmetric functions; this is axiomatically rooted in quantum theory, going back to Pauli's exclusion principle~\cite{pauli1925zusammenhang}. Given a set of sufficiently smooth univariate functions $\{\chi_i\}_{i=1}^n$ defining a vector space $V$, we construct the Galerkin space 
\begin{equation*}
\bigwedge^dV\simeq \bigwedge^d \mathbb{R}^n = \mathcal{H} 
\end{equation*}
with its standard basis vectors 
\begin{equation*}
\chi_I
:=
\chi_{i_1} \wedge \chi_{i_2} \wedge  \cdots \wedge \chi_{i_d}
\leftrightarrow
e_I := e_{i_1} \wedge e_{i_2} \wedge  \cdots \wedge e_{i_d},
\end{equation*} 
for (increasingly ordered) subsets $I = \{i_1,i_2,\ldots,i_d\} \in \binom{[n]}{d}$. Taking specific considerations regarding the univariate functions into account, see~\cite[Section 4]{faulstich2024algebraic}, the first basis vector $e_{[d]}$ for $[d] := \{1,2,\ldots,d\}$ takes a special role, and it is referred to as the {\em reference state}. General vectors $\psi \in \mathcal{H}$ are called {\em (quantum) states} and they are written uniquely as linear combinations of the basis vectors, i.e., 
\begin{equation}\label{eq: quantum states}
\psi 
= 
\sum_{I \in \binom{[n]}{d}} \psi_I  e_I.
\end{equation}

\noindent We note that, up to a sign, every basis vector $e_I$ can be obtained by replacing $k\in [d]$ of the vectors $e_{\ell}\in\mathbb{R}^n$ in $e_{[d]}$. We define this operation for a single replacement as a concatenation of the interior product with the exterior product. More precisely, let $i \in [d]$ and $a \in [n] \setminus [d]$, then the replacement of $i$ by $a$ is represented as
\begin{equation}\label{eq: def of X_ij}
X_{i,a} e_I :=  e_a \wedge (e_i \lrcorner e_I).
\end{equation}

\begin{example}[$d=2,n\geq4 $] 
\label{ex:BasisReplacement}
We obtain that 
$e_{[2]} = e_{\{1,2\}} = e_1 \wedge e_2$
and 
\begin{equation*}
e_{\{2,3\}} 
= e_2 \wedge e_3
= -e_3 \wedge e_2
= -e_3 \wedge (e_1 \lrcorner (e_1 \wedge e_2))
=-e_3 \wedge (e_1 \lrcorner e_{\{1,2\}})
= - X_{1,3} e_{\{1,2\}},
\end{equation*}
where $X_{1,3}$ replaces index 1 with 3.
\end{example}

\noindent We refer to the replacement as {\it $i$-$a$-excitation} and the corresponding matrix $X_{i,a}$ is called the $i$-$a$-excitation matrix (also called an elementary particle-hole excitation operator~\cite{vcivzek1966correlation}). 

\begin{example}[$d=2,n=5$] 
\label{ex:2in5}
For 2 electrons in 5 spin-orbitals, there are $10$ two-element subsets of $[5]$:
\begin{equation*}
{{[5]} \choose {2}}
=
\left\lbrace
\{1,2\},\{1,3\},\{1,4\},\{1,5\},
\{2,3\},\{2,4\},\{2,5\},
\{3,4\},\{3,5\},
\{4,5\}
\right\rbrace.
\end{equation*}
The excitation matrix $X_{2,4}$ we obtain for exchanging $e_2$ for $e_4$ is given by
\begin{equation*}
X_{2,4}
=
\begin{bmatrix}
0 &  0 & 0 & 0 & 0 &    0 &    0 &  0 & 0 & 0 \,\,\, \\
0 &  0 & 0 & 0 & 0 &    0 &    0 &  0 & 0 & 0 \,\,\, \\
1 &  0 & 0 & 0 & 0 &    0 &    0 &  0 & 0 & 0 \,\,\, \\
0 &  0 & 0 & 0 & 0 &    0 &    0 &  0 & 0 & 0 \,\,\, \\
0 &  0 & 0 & 0 & 0 &    0 &    0 &  0 & 0 & 0 \,\,\, \\
0 &  0 & 0 & 0 & 0 &    0 &    0 &  0 & 0 & 0 \,\,\, \\
0 &  0 & 0 & 0 & 0 &    0 &    0 &  0 & 0 & 0 \,\,\, \\
0 &  0 & 0 & 0 &-1 & 0 &   0 &  0 & 0 & 0 \,\,\, \\
0 &  0 & 0 & 0 & 0 & 0 &   0 & 0 & 0 & 0 \, \,\,\\
0 &  0 & 0 & 0 & 0 & 0 & 1 & 0 & 0 & 0 
\end{bmatrix}.
\end{equation*}  
Its rows and columns are labeled by the ordered sets in ${{[5]} \choose {2}}$.
\end{example}

\noindent This concept can be generalized to multiple index replacements, i.e., let $\alpha \subseteq [d]$ and $\beta \subseteq [n] \setminus [d]$ with $|\alpha| = |\beta| = k$, then 
\begin{equation}\label{eq: def of X_IJ}
X_{\alpha, \beta} = \prod_{j=1}^k X_{\alpha_j,\beta_j}.
\end{equation}
This uniquely identifies every basis element $e_I$ with an $\alpha$-$\beta$-excitation upto a sign. Hence, 
\begin{equation}
\label{eq:FCI}
\psi 
= 
\sum_{I \in \binom{[n]}{d}} \psi_I  e_I 
=
\sum_{\alpha, \beta}
c_{\alpha,\beta} X_{\alpha, \beta}e_{[d]}
=
\Omega e_{[d]}
\end{equation}
where $\alpha \subseteq [d]$ and $\beta \subseteq [n] \setminus [d]$ with $|\alpha| = |\beta| = k$. The coefficients $c_{\alpha,\beta}$ are known as {\em configuration interaction coefficients} in quantum chemistry that define the wave operator $\Omega$. The cardinality of $\alpha$ plays an important role; it is known as the {\it excitation rank} or \emph{level}. The identification between these two systems of coordinates on $\mathcal{H}$ is  as follows:
 \begin{equation}
 \label{eq:cpsi}  
 c_{\alpha,\beta} = \psi_I
 \quad {\rm where} \quad 
 I = ([d] \backslash \alpha) \cup \beta. 
 \end{equation}

\noindent A key observation is that excitation matrices $X_{\alpha, \beta}$ are nilpotent to order two which yields that 
\begin{equation*}
T 
= \log(\Omega)
= \sum_{k=0}^\infty \frac{(-1)^k}{k+1} (C-Id)^{k+1} 
= \sum_{k=0}^d \frac{(-1)^k}{k+1} (C-Id)^{k+1},
\end{equation*}
known as the cluster operator, is well-defined~\cite{faulstich2024coupled}. We moreover note that the linear hull of cluster matrices is closed under multiplication by construction (c.f. \eqref{eq: def of X_IJ}). Hence, 
\begin{equation*}
T(t)
= \sum_{\alpha, \beta} t_{\alpha,\beta} X_{\alpha, \beta}
\end{equation*}
giving rise to the \emph{cluster amplitudes} $t_{\alpha,\beta}$. Note that, just as we have two systems of coordinates $\{\psi_I\}$ and $\{c_{\alpha, \beta}\}$ parameterizing the quantum states in \eqref{eq:FCI}, there is an equivalent representation of the cluster amplitudes as 
\begin{equation*}
 t_{\alpha,\beta} = x_I
 \quad {\rm where} \quad 
 I = ([d] \backslash \alpha) \cup \beta. 
\end{equation*}
Just like the quantum states, the cluster amplitudes live in a vector space isomorphic to $\bigwedge^d \mathbb R^n$.
Put differently, the cluster operator $T(x)$ is a matrix of size ${n \choose d} \times {n \choose d}$ indexed by ${[n] \choose d}$ which has as entries polynomials in $x$ and is nilpotent of order $d+1$. We conclude that an intermediately normalized quantum state $\psi$, i.e., $\langle \psi ,e_{[d]} \rangle = 1$, can be equivalently represented via 
\begin{equation}\label{eq: exponential parametrization}
  \psi = \exp(T(x))e_{[d]},
\end{equation}
which is known as the \emph{exponential parametrization} of $\psi$.

For more background on the construction of the matrix $T$, we refer to \cite{faulstich2024algebraic}. Note that in the image of the exponential map,  $\psi_{[d]}=1$. Furthermore, the exponential map is birational; therefore we can also express the cluster amplitudes as polynomials in the quantum states $x = x(\psi)$. Substituting the ansatz $\psi=e^{T} e_{[d]}$ in the Schrödinger equation \eqref{eq:schroedinger} yields 
\begin{equation*}
H e^{T} e_{[d]} = \lambda e^{T} e_{[d]}
\quad \Leftrightarrow \quad 
\left\lbrace 
\begin{aligned}
e_{[d]}^\top e^{-T} H e^{T} e_{[d]} &= \lambda, \\
e_{I}^\top e^{-T} H e^{T} e_{[d]} &= 0~\forall I.
\end{aligned}
\right.
\end{equation*}
The latter is known as \emph{the coupled cluster equations}. 

Since these systems are very large and computationally very expensive, one can impose special constraints on the coupled cluster equations. Instead of considering the linear system in ${n \choose d}$ variables and ${n \choose d}$ equations, one can truncate the problem to a smaller, nonlinear one:
\begin{equation}\label{eq:truncatedproblem}
   (H \psi)_{\sigma} = \lambda \psi_{\sigma}, \quad \psi \in V_{\sigma}. 
\end{equation}
Here, $\sigma \subsetneq [d]$ is a non-empty set of possible excitation ranks and $(\cdot)_{\sigma}$ denotes the projection to coordinates indexed by level in $\sigma$. Moreover, the feasible set is restricted to the so-called \emph{truncation variety} denoted by $V_{\sigma}$. It is defined as the projectivization of the variety in $\mathcal H$ obtained by truncating the exponential parametrization to a certain subspace.

\begin{remark}
In order to use techniques from algebraic geometry, it is often preferable to work over the complex field. In this respect, from this moment on, we complexify all real vector spaces introduced so far. Projective spaces and algebraic varieties will be over $\mathbb{C}$.
\end{remark}

Let $\mathbb C[ \bigwedge^d\mathbb C^n]=\mathbb C[x_I \mid I \in \binom{[n]}{d}]$ be the coordinate ring of $\bigwedge^d \mathbb C^n$. Consider the linear subspace 
\begin{align*}
\mathcal V_{\sigma} & := \textrm{Span}( e_I \mid |I \setminus [d]| \in \sigma \cup \{0\} ) = \textrm{V}(x_J \mid |J \setminus [d] | \in [d] \setminus [\sigma]) \subseteq \bigwedge^d \mathbb C^n
\end{align*}
spanned by all basis vectors of level in $\sigma$ and cut out by the vanishing of $x_J$ of level not in $\sigma$.

\begin{definition}
 The image of the linear subspace $\mathcal V_{\sigma}$ under the injective exponential parametrization $\bigwedge^d\mathbb C^n \rightarrow \mathcal{H}$ defined by \eqref{eq: exponential parametrization} is called the \emph{affine truncation variety} denoted by {\relpenalty=10000\binoppenalty=10000 $V_{\sigma}^{\mathbb A} \subset \mathcal H \simeq \mathbb C^{n \choose d}$}. The \emph{(projective) truncation variety} $V_{\sigma}$ is defined as the projective closure of $V_{\sigma}^{\mathbb A}$ in {\relpenalty=10000\binoppenalty=10000$\mathbb P(\mathcal H) = \mathbb P^{{n \choose d}-1}$}.
\end{definition}

\noindent We observe that $V_{\sigma} \cap \{ \psi_{[d]} =1\} = V_{\sigma}^{\mathbb A}$.
Since the exponential parametrization is rational, its dimension is the same as the dimension of $\mathcal V_{\sigma}$. By abuse of notation, we will write $\mathbb C[\bigwedge^d\mathbb C^n]$ also in place of $\mathbb C[\mathcal{H}]= \mathbb C[\psi_I \mid I \in \binom{[n]}{d}]$.

For $\sigma = \{1\}$, the coupled-cluster (CC) singles truncation variety $V_{\{1\}}$ coincides with the Grassmannian $\mathrm{Gr}(d, n)$~\cite{borovik2024coupled}. In practice, CC doubles ($\sigma = \{2\}$) and CC singles doubles ($\sigma = \{1,2\}$) are the prevalent choices of truncation. We will focus on the coupled-cluster doubles (CCD) truncation variety $V_{\{2\}}$.
By \cite[Theorem~3.10]{faulstich2024algebraic}, for $d \leq 3$, the $V_{\{2\}}$ is linear. 
This paper aims to analyze $V_{\{2\}}$ in the first non-linear case $d = 4$, i.e.\ for four electrons.

\section{The coupled cluster doubles truncation variety}\label{Section 2}

This section studies the coupled cluster doubles truncation variety $V_{\{2\}}$ (henceforth referred to simply as the truncation variety) in the first genuinely nonlinear regime, namely, $d=4$. Doubles truncation already yields a rich projective variety whose geometry, to the best of our knowledge, does not coincide with a previously studied class. Our goal is twofold: First, we construct $V_{\{2\}}$ explicitly from the truncated exponential parametrization and derive defining equations in coordinates organized by excitation level. Second, we use these equations to analyze algebraic and geometric features that directly affect computation, including homogenization and saturation, complete intersection behavior, and structural properties of the quadratic generators. 

\subsection{Construction for four electrons}

Recall that the level of a coordinate $\psi_{I}$ is the cardinality of $I \setminus [4]$. In general, following \cite[Theorem 2.5]{faulstich2024algebraic}, the coordinates of the exponential parametrization of a certain level are all replicas of some master polynomial of this level. More precisely, the master polynomial of level $d$ is $c_{[d], [n]\setminus [d]}(t)$ where $n = 2d$, i.e., 
\begin{equation*}
c_{[d], [n]\setminus [d]}(t)= \sum\limits_{\pi} \sign(\pi) t_{\pi},
\end{equation*}
where the sum goes over all uniform block permutations 
$ \pi = \{ \alpha_1 \cup \beta_1, \ldots, \alpha_k \cup \beta_k \}$
of $[2d]$
with $\alpha_i \subset [d]$, $\beta_i \subset [2d] \setminus [d]$ and $|\alpha_i|=|\beta_i|$
and where 
$t_{\pi} = t_{\alpha_1, \beta_1} \cdots t_{\alpha_k ,\beta_k}$ 
and the sign of $\pi$ is the product of the signs of the permutations 
$[d] \mapsto (\alpha_1, \ldots, \alpha_k)$ and $[2d] \setminus [d] \mapsto (\beta_1, \ldots, \beta_k)$. Similarly, we obtain representations $\psi_J(x)$ that are replicas of master polynomials in $x$. These replicas $\psi_J$ 
then arise from the master polynomials by changing indices and possibly the sign. This sign is the same as the sign of 
$x_J$ in the first column of the matrix $T$ (see Section~\ref{sec:background}). This is due to the fact that 
$x_J$ appears linearly in 
$\psi_J(x)$, originating from the degree one term in the sum formula of the exponential parametrization and from the fact that the linear term always has a positive sign in the master polynomial. 

The entries of $T$ in column $I=[4]$ and a row $J$ are given by $\pm x_J$ or $\pm t_{I \setminus J, J \setminus I}$ with the sign determined by
the sign of the permutation $\pi_J: I \mapsto (I \cap J, I \setminus J)$ times the sign of the permutation $\pi_I: J \mapsto (I \cap J, J \setminus I)$. 
For $I=[4]$ and $J$ of level $4$, both of these permutations are the identity.
For $I=[4]$ and $J$ of level 2, $\sign(\pi_I) = \sign (\text{id}) =1$ and 
\begin{equation}
\sign(\pi_J)= \left\{ \begin{array}{cl}
    1, & J \cap [4] = \{1,2\}, \{1,4\}, \{2,3\}, \{3,4\}, \\
    -1,  & J \cap [4] = \{1,3\}, \{2,4\}.
\end{array} \right.
\end{equation}\label{eq: sign of pi}
Note that for $d=4$ electrons, the master polynomials for levels $1$ and $3$ contain, by construction, a level $1$ or level $3$ variable in every term. Hence, when truncating at level $2$, these terms vanish. At level $2$ we obtain ${4 \choose 2}{n-4 \choose 2}$ replicas of the master polynomial 
$\psi_{34}(x) =  x_{14}x_{23} - x_{13}x_{24} + x_{34}$
with the sign adjusted as in \eqref{eq: sign of pi}. Thus, truncating to level $2$ yields the level $2$ linear terms $\pm x_J$.
At level~$4$, after truncating to level $2$, we are left with $n - 4 \choose 4$ quadrics $P_J(x)$ in $\mathbb C[\bigwedge^4\mathbb C^n]$
where $J:=\{j_1 < j_2 < j_3 < j_4\} \in \binom{[n] \setminus [4]}{4}$.
Since for $J$ of level 2 it is $\psi_I = \pm x_I$ and since the monomials of the quadrics are $x_{I_1}x_{I_2}$ for $I_1,I_2$ of level 2 with $\sign(\pi_{I_1}) = \sign (\pi_{I_2})$, we can express the level 4 coordinates $\psi_J$ with $J \in \binom{[n]\setminus [4]}{4}$ as well as the quadrics $P_J$ in the coordinate ring $\mathbb C[\mathcal{H}]=\mathbb C[\bigwedge^4\mathbb C^n]$ as $P_J(\psi)$. Namely, $P_J(\psi)$ are of the form
\begin{small}
\begin{equation}\label{eq:PJ}
\begin{gathered}
\psi_{14i_3i_4} \psi_{23 j_1 j_2} 
- \psi_{14 j_2 j_4}\psi_{23 j_1 j_3} 
+ \psi_{14 j_2 j_3}\psi_{23 j_1 j_4} 
+ \psi_{14 j_1 j_4}\psi_{23 j_2 j_3}
- \psi_{14 j_1 j_3}\psi_{23 j_2 j_4}
+ \psi_{14 j_1 j_2}\psi_{23 j_3 j_4} \\
- \psi_{13 j_3 j_4}\psi_{24 j_1 j_2}
+ \psi_{13 j_2 j_4}\psi_{24 j_1 j_3}
- \psi_{13 j_2 j_3}\psi_{24 j_1 j_4}
- \psi_{13 j_1 j_4}\psi_{24 j_2 j_3}
+ \psi_{13 j_1 j_3}\psi_{24 j_2 j_4}
- \psi_{13 j_1 j_2}\psi_{24 j_3 j_4} \\
+ \psi_{12 j_3 j_4}\psi_{34 j_1 j_2}
- \psi_{12 j_2 j_4}\psi_{34 j_1 j_3}
+ \psi_{12 j_2 j_3}\psi_{34 j_1 j_4}
+ \psi_{12 j_1 j_4}\psi_{34 j_2 j_3}
- \psi_{12 j_1 j_3}\psi_{34 j_2 j_4}
+ \psi_{12 j_1 j_2}\psi_{34 j_3 j_4}.
\end{gathered}
\end{equation}
\end{small}

\noindent The quadrics $P_J$ \eqref{eq:PJ} for $J \in \binom{[n] \setminus [4]}{4}$ are the defining polynomials of the affine truncation variety $V_{\{2\}}^{\mathbb A}$. Therefore, we have the straightforward description of $V_{\{2\}}^{\mathbb A}$ as follows.

\begin{proposition}\label{prop:defeqs}
The affine truncation variety $V_{\{2\}}^{\mathbb A} \subset \mathcal{H}\simeq \mathbb C^{{n \choose 4}}$ is isomorphic to
the affine graph of the map
$$
\CC^{6 \cdot \binom{n - 4}{2}} \to \CC^{\binom{n - 4}{4}}, \quad \psi \mapsto (P_J(\psi))_{J \in {{[n]\setminus [4]} \choose 4}}.
$$
\end{proposition}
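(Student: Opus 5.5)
The plan is to compute the image of the truncated exponential parametrization coordinate by coordinate, sorted by excitation level, and then read off a graph.

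\textbf{Step 1: parametrize $\mathcal V_{\{2\}}$.} Its points are vectors $x$ whose only nonzero coordinates are $x_{[4]}$ and the level-$2$ coordinates $x_I$. There are $\binom{4}{2}\binom{n-4}{2}=6\binom{n-4}{2}$ of the latter. With $T=T(x)$, I compute $\psi=\exp(T(x))e_{[4]}$ level by level. By \cite[Theorem~2.5]{faulstich2024algebraic}, each coordinate $\psi_J$ of level $k$ is a signed replica of the level-$k$ master polynomial. Every term of that polynomial is indexed by a uniform block permutation, which includes one block per factor $t_{\alpha_i,\beta_i}$.

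\textbf{Step 2: evaluate each level.}
\begin{itemize}
\item Level $0$: $\psi_{[4]}=1$.
\item Levels $1$ and $3$: every block partition of a $1$- or $3$-element excitation contains a block of odd size. Each term therefore contains a level-$1$ or level-$3$ variable, which vanishes on $\mathcal V_{\{2\}}$, so $\psi_J=0$.
\item Level $2$: the only partitions using blocks of size $2$ are the trivial one. So the master polynomial reduces to its linear term, and $\psi_J=\pm x_J$, with the sign given by \eqref{eq: sign of pi}.
\item Level $4$: only partitions into blocks of size $4$ (absent) or two blocks of size $2$ survive. Hence $\psi_J$ is a quadric in the level-$2$ variables. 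Substituting $x_I=\pm\psi_I$ gives exactly $P_J(\psi)$ of \eqref{eq:PJ}.
\end{itemize}

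\textbf{Step 3: read off the graph.} By Step 2, $V_{\{2\}}^{\mathbb A}$ is the set of $\psi$ with $\psi_{[4]}=1$ and vanishing level-$1$ and level-$3$ coordinates. The level-$2$ coordinates are arbitrary, because $x\mapsto(\pm x_I)$ is a linear bijection. The level-$4$ coordinates satisfy $\psi_J=P_J(\psi_{\text{level }2})$. This is the graph of $\psi\mapsto(P_J(\psi))_J$, sitting inside the affine subspace cut out by these linear equations. Projecting onto the level-$2$ and level-$4$ coordinates identifies that subspace with $\CC^{6\binom{n-4}{2}}\times\CC^{\binom{n-4}{4}}$. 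Graphs of polynomial maps are closed, so the image of the parametrization is already Zariski closed and equals $V_{\{2\}}^{\mathbb A}$.

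\textbf{Main obstacle.} The delicate step is the sign bookkeeping at level $4$: checking that the master-polynomial signs combine with the replica signs $\pm$ from $x_I=\pm\psi_I$ to give precisely the eighteen signed terms in \eqref{eq:PJ}. I would handle this first in the master case $J=\{5,6,7,8\}$, $n=8$. There I would enumerate the $\binom{4}{2}\cdot 3=18$ partitions of $[8]$ into two uniform blocks of size $2$. The $\frac{1}{2}T^2$ term combined with the ordering of the two factors gives coefficient $1$ per partition. I would then multiply the block-permutation sign by the two replica signs and observe that the replica signs agree for complementary pairs $\{1,4\}/\{2,3\}$, $\{1,3\}/\{2,4\}$ and $\{1,2\}/\{3,4\}$, so they cancel in the product. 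The general $J$ follows by relabeling, since this relabeling is order-preserving on $[n]\setminus[4]$.
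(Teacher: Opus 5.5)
Your proposal is correct and follows essentially the same route as the paper. The paper's argument is the discussion preceding the proposition: it evaluates the replicas of the master polynomials from \cite[Theorem~2.5]{faulstich2024algebraic} level by level, observes that levels $1$ and $3$ vanish and that level $2$ gives $\psi_J=\pm x_J$, and uses $\sign(\pi_{I_1})=\sign(\pi_{I_2})$ for complementary pairs to rewrite the level-$4$ quadrics as $P_J(\psi)$; your explicit check of the eighteen signed terms and of the order-preserving relabeling adds detail the paper leaves implicit.
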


\indent In the above proposition, we have an isomorphism and not an equality since $V_{\{2\}}^{\mathbb A}$ is defined in a bigger ambient space, and it is also cut out by the linear equations imposed by level $1$ and level $3$ coordinates.
Homogenizing the corresponding ideal with respect to $\psi_{1234}$ yields the homogeneous prime ideal for the projective truncation variety $V_{\{2\}} \subset \mathbb P(\mathcal{H})$ as in~\cite[Theorem 3.2]{faulstich2024algebraic}. More precisely,
\begin{align*}
\mathcal I(V_{\{2\}}) &= \overline{\langle \{\psi_I \mid I \text{ of level }1,3\} \cup \{ - \psi_J + P_J \mid J \text{ of level }4\} \rangle} \\
&=
\langle \{\psi_I \mid I \text{ of level }1,3\} \cup \{ - \psi_J \psi_{[4]} + P_J \mid J \text{ of level }4\} \rangle: \langle\psi_{[4]}\rangle \subseteq 
\mathbb C[\textstyle{\bigwedge^4 \mathbb C^n}]
\end{align*}
where the bar denotes the homogenization of the ideal. 
For $J \in {{[n]\setminus [4]} \choose 4}$ of level $4$ we denote by 
\begin{align}\label{homogenized quadrics}
Q_{J}(\psi) := -\psi_{J}\psi_{[4]} + P_{J}(\psi)\in \mathbb C[\textstyle{\bigwedge^4 \mathbb C^n}],
\end{align}
the homogenized quadrics.

\begin{example}[$8$ and $9$ spin orbitals]\label{ex: 8 and 9 spin orbitals}
For $n=8$, there is only one level $4$ index, namely, $\{5,6,7,8\}$, hence, $\mathcal I(V_{\{2\}}) = \langle \{\psi_I \mid I \text{ of level }1,3\} \cup \{Q_{5678}\} \rangle$. Thus, the truncation variety $V_{\{2\}}$ has degree 2. 
For $n=9$, there are five level $4$ indices. One can computationally verify that the truncation variety in this case is a complete intersection, defined by the linear equations and the five homogeneous quadrics $Q_{5678}$, $Q_{5679}$, $Q_{5689}$, $Q_{5789}$ and $Q_{6789}$. We note that the saturation is not needed in this case, and that the degree of the truncation variety is $2^5 = 32$.
\end{example}

\noindent In practice, this is hard to generalize computationally: the saturation is very expensive, since it includes the computation of a Gröbner basis with doubly exponential worst-case complexity. Note that for $n = 10$ we have $15$ homogenized quadrics in $90$ variables. We ran this example on an interactive compute server at the Max Planck Institute for Mathematics in the Sciences\footnote{2× Intel Xeon Gold 6240 CPUs with 36 cores/72 threads per CPU, 1 TB DDR4 RAM, and NVMe SSD storage.} with 17 maximum allowable threats in \texttt{Macaulay2} to compute the degree of the projective truncation variety (see the code in \cite{Faulstich2025SupplementaryCode}). It did not terminate after more than two weeks. We can, however, use the structure of these quadrics to restate the problem.

\subsection{Complete intersections}\label{sec:complete intersections}
\noindent Let $\mathbb C^n = U \oplus W$ where $U=\text{Span}(e_1,\ldots, e_4)$ and $W=\text{Span}(e_5,\ldots , e_n)$. 
One can decompose the ambient space $\mathcal{H}=\bigwedge^4\mathbb C^n$ as
\begin{equation*}
\textstyle{\bigwedge^4(U\oplus W) = \bigwedge^4U \ \oplus \ \big(\bigwedge^3U \otimes W \big)\ \oplus \ \big( \bigwedge^2U \otimes \bigwedge^2 W \big) \ \oplus \ \big( U \otimes \bigwedge^3W \big) \oplus \bigwedge^4W} .
\end{equation*}
The linear forms $\{\psi_I \mid I \text{ of level }1,3\} \subset \mathbb C[\bigwedge^4\mathbb C^n]$ correspond to the coordinates of the summands $\bigwedge^3U \otimes W$ and $U \otimes \bigwedge^3W$. In light of the containment $V_{\{2\}}\subset \textrm{V}(\psi_I \mid I \text{ of level }1,3)$, we consider the truncation variety in the ambient space in which it is non-degenerate, denoted by
\[ \mathscr{V}_{\{2\}} \subset \mathbb P\left(\textstyle{\bigwedge^4U \oplus \big( \bigwedge^2U \otimes \bigwedge^2 W \big) \oplus \bigwedge^4W}\right)  .\]

\noindent Note that ${\codim \mathscr{V}_{\{2\}} = {{n-4} \choose 4}}$. Intersecting $\mathscr{V}_{\{2\}}$ with the affine chart $\{\psi_{[4]}\neq 0\}$, we obtain the affine variety 
\begin{equation*}
\mathscr{V}_{\{2\}}^{\mathbb A} := \textrm{V}(\psi_J - P_J \mid J \textrm{ of level 4}) \subset \textstyle{\left( \bigwedge^2 U \otimes \bigwedge^2 W \right) \oplus \bigwedge^4 W}.
\end{equation*}

\noindent Next, consider the naive homogenization of $\mathscr{V}_{\{2\}}^{\mathbb A}$
\begin{equation*}
V_Q := \textrm{V}(Q_J \mid J \textrm{ of level }4) \subset \mathbb P \textstyle{\left(\bigwedge^4U \oplus\left( \bigwedge^2 U \otimes \bigwedge^2 W \right) \oplus \bigwedge^4 W \right)},
\end{equation*}
obtained by homogenizing the generators of the ideal defining $\mathscr{V}_{\{2\}}^{\mathbb A}$. We note that the non-degenerate truncation variety $\mathscr{V}_{\{2\}}$ is an irreducible component of $V_Q$. More precisely, $\mathscr{V}_{\{2\}}$ is obtained from the saturation
$\mathcal I(\mathscr{V}_{\{2\}}) = \langle Q_J \mid J \textrm{ of level }4 \rangle : \langle \psi_{[4]}\rangle$, i.e., $\mathscr{V}_{\{2\}}$ arises from $V_Q$ by removing all components contained entirely in the hyperplane 
\begin{equation*}
H_{\infty} := \textrm{V}(\psi_{[4]}) \subset \mathbb P \textstyle{\left(\bigwedge^4U \oplus\left( \bigwedge^2 U \otimes \bigwedge^2 W \right) \oplus \bigwedge^4 W \right)}.
\end{equation*}
The intersection $V_Q \cap H_\infty$ is isomorphic to
\begin{equation*}
V_P:= \textrm{V}(P_J \mid J \textrm{ of level }4) \subset 
\textstyle{\mathbb P\big(\big( \bigwedge^2U \otimes \bigwedge^2 W \big) \oplus \bigwedge^4W \big)}. 
\end{equation*}

\begin{lemma}\label{lemma:transint}
The quadrics $Q_J$ with $J$ of level 4  intersect transversally on the chart $\{\psi_{[4]} \neq 0\}$.
\begin{proof}
By \cite[Section 2.2.1]{shafarevich2013basic}, it suffices to show that
\begin{equation*}
\codim \bigcap\limits_{J \text{ of level 4}} T_p(Q_J) 
= \sum\limits_{J \text{ of level 4}} \codim \, \textrm{V}(Q_J) = {{n-4}\choose 4}
\end{equation*}
for $p \in Y\setminus \textrm{V}(\psi_{[4]} )$.
Denote $\Jac_J$ the Jacobian of $\textrm{V}(Q_J)$. Note that 
\begin{equation*}
(\Jac_J(p))_I = \left\{ 
\begin{array}{ll}
    -p_J & {\rm if}~I=[4], \\
     \pm p_{([4] \cup J) \setminus I} & {\rm if}~I \textrm{ of level 2 s.t. } I \cap ([n] \setminus [4]) \subset J \cap ([n] \setminus [4]))\\
     -p_{1234} & {\rm if}~I = J, \\
     0 & \textrm{otherwise,}
\end{array}
\right.
\end{equation*}
and therefore
\begin{equation*}
\Jac_J(p) \cdot v
=
-p_{1234} v_J - p_J v_{1234} + \varphi_J(p,v),
\end{equation*}
where $\varphi_J(p,v)$ is a polynomial using $p_I$ and $v_I$ for $I$ of level 2. Then, 
\begin{equation*}
T_p(\textrm{V}(Q_J))  = \ker \Jac_J(p) = \left\{ v \mid v_{J} = \frac{\varphi_J(p,v)- p_J v_{1234}}{p_{1234}} \right\}.
\end{equation*}
Hence, all variables except for $v_J$ are free in $T_p(\textrm{V}(Q_J))$. However, $v_J$ is free in all the other tangent spaces $T_p(\textrm{V}(Q_{J'}))$, in fact, the index $J$ does not occur in $Q_{J'}$. As a consequence, $\bigcap_{J} T_p(Q_J) $ is a complete intersection and has the desired codimension.
\end{proof}
\end{lemma}

\begin{proposition} \label{ci iff sat}
    The variety $V_P$ is a complete intersection if and only if $\mathscr{V}_{\{2\}} = V_Q$.
    In this case, the truncation variety $V_{\{2\}} \subset \mathbb P^{{n \choose 4}-1}$ coincides with the naive homogenization of the affine variety $V_{\{2\}}^{\mathbb A} \subset \mathbb C^{n \choose 4}$, that is
    \[ V_{\{2\}} = {\normalfont \textrm{V}(Q_J \mid J \textrm{ of level }4) \cap \textrm{V}(\psi_I \mid I \textrm{ of level }1,3)} .\]
    \begin{proof}
        If the quadrics $P_J$ are a complete intersection, then $$\codim (V_Q \cap H_\infty) = \codim (V_P)+1 = {{n-4} \choose 4}+1 = \codim (\mathscr{V}_{\{2\}})+1.$$
        Since $V_Q = \mathscr{V}_{\{2\}} \cup (V_Q \cap H_\infty)$, this implies that $\dim V_Q = \dim \mathscr{V}_{\{2\}}$. Also, by \cite[Corollary 2.1]{brysiewicz2019degree} (or \cite[Proposition 16]{brysiewicz2023lawrence}), then $\deg \mathscr{V}_{\{2\}} = \deg \mathscr{V}_{\{2\}}^{\mathbb A} = \deg V_Q$. 
        Thus, since $\mathscr{V}_{\{2\}} \subset V_Q$ and since $V_Q$ is locally a complete intersection in the chart $\{\psi_{[4]}\neq 0\}$, equality holds.
        Assume the quadrics $P_J$ are not a complete intersection. Then, $\dim (V_Q \cap H_\infty) \geq \dim \mathscr{V}_{\{2\}}$ and therefore $(V_Q \cap H_\infty) \not\subset \mathscr{V}_{\{2\}}$: equality is not possible since $\mathscr{V}_{\{2\}}$ may only properly intersect (in particular, cannot be contained in) $H_\infty$ by definition.
        The naive homogenization $V_Q$ thus has at least two components, one of them being $\mathscr{V}_{\{2\}}$.
    \end{proof}
\end{proposition}

\begin{remark}
If $V_P$ is a complete intersection, then its defining quadrics form a regular sequence. Moreover, homogenizing the polynomials $\psi_J- P_J$ suffices to generate the homogenized ideal $\mathcal I(V_{\{2\}}) \subset \mathbb C[\bigwedge^4\mathbb C^n]$: 
Let 
$f = \sum\limits_{J \textrm{ lv }4} p_J(\psi_J - P_J) \in \langle \psi_J - P_J \mid J \textrm{ of level }4\rangle$
where $p_J \in \mathbb C[\bigwedge^4\mathbb C^n]$. 
First, assume that $\sum p_J P_J \neq 0$, then:
$$\overline{f} = \sum \overline{p_J} \psi_J \psi_{[d]} - \sum \overline{p_J} P_J = \sum \overline{p_J} (\overline{\psi_J- P_J}) \in \langle \overline{\psi_J - P_J }\rangle.$$
Second, assume that $\sum p_J P_J = 0$.  Since the $P_J$ are a regular sequence, their first syzygies are generated by $e_J P_I - e_I P_J$, where $\{ e_J \}$ is a basis for the degree one part of the Koszul complex associated to the sequence generated by $P_J$ \cite[Section 13]{harris1992algebraic}. In particular, $p_J = \sum_I h_{J,I} P_I$, where $(h_{J,I})_{J,I}$ is a skew-symmetric matrix.
Thus, $\sum\limits_{J,I} h_{J,I} \psi_J \psi_I = 0$ and therefore
\begin{align*}
f 
&= 
\sum\limits_J p_J \psi _J 
=
\sum\limits_J \textstyle{\left( \sum\limits_I h_{J,I} P_I \right)} \psi_J 
= \sum\limits_{I} \textstyle{\left( \sum\limits_J  h_{J,I} \psi_J \right)} P_I + \sum\limits_{J,I} h_{J,I} \psi_J \psi_I\\ 
&= \sum\limits_I \textstyle{\left( \sum\limits_J h_{J,I} \psi_J \right)} (\psi_I + P_I).
\end{align*}
Hence, we obtain that  $\overline f \in \langle \overline{\psi_J - P_J} \mid J \textrm{ of level }4\rangle$.
\end{remark}

\noindent If the quadrics $Q_J$ are the only non-linear generators of the homogeneous prime ideal of the truncation variety, then the ideal $\langle Q_J \ | \ J \textrm{ of level 4} \rangle$ is prime. We may use this fact to compute its degree.

\begin{lemma}\label{lemma: gen transint}
If $\mathcal I(V_Q)$ is prime, then the quadrics $Q_J$ intersect generically transversally. 

\begin{proof}
Any point $p \in V_Q \cap \{\psi_{[4]} \neq 0 \}$ is a generic point of $V_Q$ at which by Lemma \ref{lemma:transint}, the quadrics $Q_J$ intersect transversally.
\end{proof}
\end{lemma}

\begin{corollary}\label{cor: degree truncation}
    If the variety $V_P \subset \mathbb P^{6{n-4 \choose 2} + {n-4 \choose 4}-1}$ is a complete intersection, then the degree of the truncation variety $V_{\{2\}} \subset \mathbb P^{{n \choose 4}-1}$ is $2^{{n-4} \choose 4}$.

    \begin{proof} This follows from Lemma \ref{lemma: gen transint}, since by
        \cite[Theorem 18.3]{harris1992algebraic}, the degree of a variety with generators intersecting generically transversally is the product of the generators' degrees.
    \end{proof}
\end{corollary}

\noindent We therefore only need to characterize up to which $n$ the quadrics $V_P$ is a complete intersection. 

\begin{proposition}\label{prop:not CI}
For $n\geq 15$, the variety $V_{\{2\}} \subset \mathbb P^{{n \choose 4}-1}$ is not a complete intersection.
\end{proposition}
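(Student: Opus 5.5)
The plan is to reduce the statement to a failure of the complete intersection property for $V_P$. We then exhibit this failure by finding a linear subspace inside $V_P$ that is too large. Throughout, write $m=n-4$.

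\emph{Step 1: a large linear subspace in $V_P$.} Each quadric $P_J$ in \eqref{eq:PJ} involves only level-2 coordinates. Every monomial pairs one coordinate from the group $\{\psi_{14\ast},\psi_{13\ast},\psi_{12\ast}\}$ with one from the group $\{\psi_{23\ast},\psi_{24\ast},\psi_{34\ast}\}$. Consider the linear subspace $L\subset \mathbb P\big((\bigwedge^2U\otimes\bigwedge^2W)\oplus\bigwedge^4W\big)$ cut out by $\psi_{12ab}=\psi_{13ab}=\psi_{14ab}=0$ for all $\{a,b\}\subset W$. Every $P_J$ vanishes identically on $L$, so $L\subset V_P$, and $\codim L = 3\binom{m}{2}$. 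If $V_P$ were a complete intersection of the $\binom m4$ quadrics $P_J$, all its components would have codimension exactly $\binom m4$. The inclusion $L\subset V_P$ forces $\binom m4\le 3\binom m2$, that is $(m-2)(m-3)\le 36$. This fails for $m\ge 9$, in particular for $n\ge 15$. Hence $V_P$ is not a complete intersection, and Proposition~\ref{ci iff sat} gives $\mathscr V_{\{2\}}\subsetneq V_Q$. Equivalently, the saturation $\langle Q_J\rangle:\langle\psi_{[4]}\rangle$ is strictly larger than $\langle Q_J\rangle$.

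\emph{Step 2: from $V_Q\neq\mathscr V_{\{2\}}$ to non-complete-intersection of $V_{\{2\}}$.} Suppose $\mathcal I(V_{\{2\}})$ were generated by exactly $\codim V_{\{2\}}$ forms. Its degree-one part is spanned by the level-1 and level-3 coordinates, since $\mathscr V_{\{2\}}$ is nondegenerate in the remaining space. These account for all the codimension except $\binom m4$, so exactly $\binom m4$ further minimal generators remain. The $\binom m4$ quadrics $Q_J$ lie in the ideal. They are linearly independent modulo the linear generators, because $Q_J$ is the only one containing the monomial $\psi_J\psi_{[4]}$. So they already contribute $\binom m4$ minimal generators in degree 2, and the generating set is forced to be the linear forms together with the $Q_J$. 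Then $V_{\{2\}}$ would equal $\textrm V(Q_J)\cap\textrm V(\psi_I\mid I\text{ of level }1,3)$, i.e.\ $\mathscr V_{\{2\}}=V_Q$, contradicting Step 1.

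\emph{Main obstacle.} I expect the delicate point to be Step 2. There one must make sure that "complete intersection" is read as ideal-theoretic, with the number of generators equal to the codimension. One must also check rigorously that no minimal generators of other degrees can substitute for the $Q_J$; graded Nakayama on the minimal generating set handles this. Step 1 is a direct check of the bilinear sign pattern of \eqref{eq:PJ}. It actually yields the bound $n\ge13$, so the stated range $n\ge15$ is covered with room to spare.
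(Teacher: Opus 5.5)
Your proof is correct, and it takes a different and sharper route than the paper.

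\textbf{The paper's route.} The paper's argument is a bare count. For $n\ge 15$ there are at least as many quadrics $P_J$ as level-$2$ variables (at $n=15$ the counts are equal, $\binom{11}{4}=330=6\binom{11}{2}$). Hence the nonempty cone $V_P$ cannot have the expected codimension. The paper then passes tacitly from ``$V_P$ is not a complete intersection'' to the statement about $V_{\{2\}}$.

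\textbf{Your Step~1.} You replace the trivial lower bound on $\dim V_P$ by an explicit linear subspace. Every monomial of \eqref{eq:PJ} contains exactly one coordinate $\psi_{1k\ast\ast}$, so the space $L$, of codimension $3\binom{m}{2}$, lies in $V_P$. In the language of Theorem~\ref{thm:Pfaffians}, $L=\Lambda\otimes\bigwedge^2W$ for a maximal isotropic subspace $\Lambda$ of the Pfaffian quadric on $\bigwedge^2U$. This moves the threshold to $n\ge 13$. So Step~1 is in fact a rigorous proof of the ``only if'' direction of Theorem$^*$~\ref{thm: num theorem}, which the paper supports only numerically. It is also consistent with the numerics at $n=12$: there $\binom{8}{4}=70<84=3\binom{8}{2}$, so $L$ gives no obstruction.

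\textbf{Your Step~2.} This fills a gap the paper leaves open. By Krull's height theorem, a complete intersection ideal has exactly $\codim V_{\{2\}}$ minimal generators. Graded Nakayama then forces $\mathcal I(V_{\{2\}})$ to be generated by the level-$1$ and level-$3$ coordinates together with the $Q_J$, contradicting $\mathscr V_{\{2\}}\subsetneq V_Q$.

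One sentence should be added there: why the $Q_J$ remain linearly independent modulo the quadrics in the ideal generated by the linear forms. Those quadrics are spanned by monomials containing a level-$1$ or level-$3$ variable, and such monomials never occur in the $Q_J$. Your monomial $\psi_J\psi_{[4]}$ then finishes the argument.

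\textbf{Cost.} Your approach commits to the ideal-theoretic meaning of ``complete intersection''. That is the natural reading of the statement anyway.
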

\begin{proof}
For $n \geq 15$, there are more quadrics $P_J \in \mathbb C[\bigwedge^2 U \otimes \bigwedge^2 W]$ than variables in $\mathbb C[\bigwedge^2 U \otimes \bigwedge^2 W]$, thus the dimension of $V_P$ cut out by the quadrics must be higher than $6 \cdot {{n-4} \choose 2} - {{n-4} \choose 4}$. 
\end{proof}
\noindent In fact, we will numerically prove in Theorem$^*$~\ref{thm: num theorem} (vide infra) that already for $n \geq 13$, the truncation variety is no longer a complete intersection and that the bound $13$ is sharp.

\subsection{A numerical proof for the complete intersection}\label{subsec:numproof}

We collect computational evidence on the value of $n$ such that $V_P \subset \mathbb P(( \bigwedge^2U \otimes \bigwedge^2 W ) \oplus \bigwedge^4W )$ is a complete intersection. Note that, by definition of the $P_J$'s, the variety $V_P$ is a cone over the zero locus of the $P_J$'s inside $\mathbb P(\bigwedge^2U \otimes \bigwedge^2 W)$, it is therefore enough to show that the latter locus is a complete intersection. Our strategy is as follows: First, we compute the intersection of $V_P$ with a random linear subspace of dimension ${{n-4} \choose 4}-1$, i.e., one less than the expected codimension of $V_P$. Finding a point in the intersection is a strong indicator for $V_P$ having codimension larger than ${{n-4} \choose 4}$. Finding no point would suggest $V_P$ is a complete intersection.
  Second, we evaluate the Jacobian of $V_P$ at this point, which will generically lie on the highest-dimensional component. Third, if the variety is smooth at that point, the rank of the Jacobian is the codimension of that component.

We start by considering $n=13$.
While it is computationally infeasible to check for non-singular solutions, we obtain strong numerical indications by using relatively large sample sizes, i.e., 300 samples. We find a point in every intersection with a random subspace of dimension ${{13-4} \choose 4} -1 = 125$ as above. Furthermore, we observe that the rank of the Jacobian at the points that we found lies between $110$ and $114$. In the case of smooth points, the rank coincides with the codimension of the components where the points lie in. Based on these numerical observations, we conclude that for $n = 13$, the codimension of $V_P$ is lower than the number of generators. In particular, the quadrics $P_J$ are not a complete intersection.

For $n=12$, however, the computations done by intersecting with a linear subspace of dimension ${{12-4} \choose 4}-1 = 69$ are intractable. In this case, we intersect with a linear subspace of dimension ${{12-4} \choose 4} = 70$, so that the intersection has expected dimension zero. Again, we stop the computation of the intersection after finding one (in this case non-singular) solution and evaluate the Jacobian of $V_P$. For $n=12$, we generate $500$ samples and observe that the rank of the Jacobian is always $70$, which coincides with the expected codimension ${{12-4} \choose 4}$.

Note that it suffices to consider these boundary cases $n=12$ and $n=13$. Indeed, the defining quadrics of $V_P$ for $n$ spin-orbitals form a subset of those defining $V_P$ for any larger number of spin-orbitals. In particular, if the quadrics $P_J$ are a complete intersection for $n=12$, then they are a complete intersection for all smaller $n$ as well. On the other hand, if they are not a complete intersection for $n=13$, then they are not a complete intersection for all larger $n$.
These numerical observations lead to the following theorem: 

\begin{numtheorem}\label{thm: num theorem}
    The variety $V_P$ is a complete intersection if and only if $n \leq 12$. Equivalently, the truncation variety $V_{\{2\}}$ coincides with the naive homogenization of the affine truncation variety, i.e.
    \[ V_{\{2\}} = \textrm{V}(Q_J \mid J \textrm{ of level }4) \cap \textrm{V}(\psi_I\mid I \textrm{ of level }1,3),  \]
    if and only if $n \leq 12$.
    \end{numtheorem}

\section{Quadrics $P_J$ and Pfaffians}\label{sec: quadrics PJs and pfaffians}

We aim to have a deeper understanding of the geometry of the CDD truncation varieties $V_{\{2\}}$ for $d=4$. In particular, we focus on the variety $V_P$ defined by the $\binom{n-4}{4}$ quadrics $P_J$ as $J $ varies in $\binom{[n]\setminus[4]}{4}$. Using techniques from representation theory, we unveil a Pfaffian structure inside such equations. For every $J \in \binom{[n] \setminus [4]}{4}$, we can write the quadric $P_J$ in \eqref{eq:PJ} as
\[
P_J(\psi) = \sum_{I \subseteq \binom{J}{2}} \sign(I)(\psi_{12I}\psi_{34I^c} - \psi_{13I}\psi_{24I^c} + \psi_{14I}\psi_{23I^c}),
\]
where $I^c := J\setminus I$ and $\sign(I)$ is the sign of the permutation $J \mapsto (I, I^c)$. In the notation of Section~\ref{sec:complete intersections}, the $P_J$'s are polynomials in the coordinate ring $\mathbb C[\bigwedge^2U \otimes \bigwedge^2W]$, and being quadrics they actually lie in the degree-$2$ component $S^2(\bigwedge^2U \otimes \bigwedge^2W)^\vee$. \\
\hfill\break
\indent The space $S^2(\bigwedge^2U \otimes \bigwedge^2W)^\vee$ is a representation of $\GL(U)\times \GL(W)$, and as such it decomposes into irreducible $\GL(U)\times \GL(W)$-representations. Alternatively, one can look at it as a representation of $\SL(U)\times \SL(W)$. However, we are interested in the action of the subgroup $\mathfrak{S}_{[4]}\times \mathfrak{S}_{[n] \setminus [4]} \subset \GL(U)\times \GL(W)$, where $\mathfrak{S}_{[4]}$ (resp. $\mathfrak{S}_{[n]\setminus [4]}$) denotes the symmetric group permuting the basis vectors of $U$ (resp. $W$). From the above expression of the $P_J$'s, one can see that the group $\mathfrak{S}_{[4]}\times \mathfrak{S}_{[n]\setminus [4]}$ acts on the $P_J$'s as follows:
\begin{equation}\label{eq:action of S4 on PJ}
(\tau,id) \cdot P_J = \sign(\tau)P_J \ \ \ \ \text{and} \ \ \ \ (id,\sigma) \cdot P_J = \pm P_{\sigma(J)}, 
\end{equation}
where the sign of the action of $(id,\sigma)$ depends on which indices of $J$ are involved from the permutation $\sigma$. 

\begin{example}\label{example:action of SJ on PJ}
For $\sigma \in \mathfrak{S}_{J}\subset \mathfrak{S}_{[n]\setminus [4]}$ permuting the indices in $J$ and fixing the indices in $[n]\setminus ([4]\cup J)$, it holds $(id, \sigma)\cdot P_J = \sign(\sigma)P_J$. However, if $\sigma \notin \mathfrak{S}_J$, the previous equality can fail. For instance, for $i< j < k < a <b <c$ one gets
$(i \, j)\cdot P_{iabc} = P_{jabc} = (i \, j \, k)\cdot P_{iabc}$. Similarly, for $i<j<k<h<a<b$ it holds $(i \, j \, k) \cdot P_{ijab} = P_{jkab} = (i \, j \, k \, h) \cdot P_{ijab}$.
\end{example}

\noindent In particular, the linear span $\langle P_J|J\rangle_\mathbb C$ is $\mathfrak{S}_4\times \mathfrak{S}_{n-4}$-invariant. On the other hand, the set of generators $\{P_J|J\}$ is invariant under the action of $\mathfrak{A}_4\times \mathfrak{A}_{n-4}\subset \SL(U)\times \SL(W)$, where $\mathfrak{A}_4=\mathfrak{S}_4\cap \SL(U)$ is the alternating group of even permutations (similarly for $\mathfrak{A}_{n-4}$). \\
\indent Throughout this section, our strategy is as follows: We begin by decomposing the space $S^2(\bigwedge^2U \otimes \bigwedge^2W)^\vee$ into irreducible representations of $\SL(U)\times \SL(W)$, then we look at each summand as representation of $\mathfrak{S}_4\times \mathfrak{S}_{n-4}$, and finally check for which of them the linear span $\langle P_J|J \rangle_{\mathbb C}$ is a $\mathfrak{S}_4\times \mathfrak{S}_{n-4}$-subrepresentation. We will make extended use of the theory of representations of the groups $\GL(\mathbb C^N)$ and $\mathfrak{S}_4$, of which we recall some basic facts below. We will use the terms {\em representations} and {\em modules} interchangeably.

\subsection{Preliminaries on Pfaffians, $\SL$-modules and $\mathfrak{S}_4$-modules}\label{subsec:preliminaries modules}

\indent The Pfaffian of a $4\times 4$ skewsymmetric matrix $M\in \bigwedge^2\mathbb C^4$ is (see \cite[Section 2.7.4]{landsberg2011tensors})
\[ \Pf {\footnotesize \begin{pmatrix} 0 & a & b & c \\ -a & 0 & d & e \\ -b & -e & 0 & f \\ -c & -e & -f & 0 \end{pmatrix}} = af - be + cd .\]
In particular, it is a quadratic polynomial in $S^2(\bigwedge^2\mathbb C^4)^\vee$. As a tensor, it can be written as 
\begin{equation}\label{eq:Pfaffian as tensor} 
    \Pf = (x_1\wedge x_2)(x_3\wedge x_4) - (x_1\wedge x_3)(x_2\wedge x_4) + (x_1\wedge x_4)(x_2\wedge x_3) .
    \end{equation}

\medskip

\indent Irreducible representations of $\SL(\mathbb C^N)$ are parametrized by integer tuples $\mu \in \mathbb Z^{N-1}$ called {\em highest weights}. We denote them by $V_{\mu}^{A_{N-1}}$, where $A_{N-1}$ is the Dynkin type of $\SL(\mathbb C^N)$. If $\mu = (a_1,\ldots , a_{N-1})$, then the irreducible representation $V_{\mu}^{A_{N-1}}$ lies in the tensor product $S^{a_1}\mathbb C^N \otimes S^{a_2}(\bigwedge^2\mathbb C^N) \otimes \ldots \otimes S^{a_{N-1}}(\bigwedge^{N-1}\mathbb C^n)$ (see \cite[Section 15.3]{fulton2013representation}). Observe that $S^{a_1}\mathbb C^N=V_{(a_1,0,\ldots, 0)}^{A_{N-1}}$ and $\bigwedge^i\mathbb C^{N}=V_{(0,\ldots,1_i,\ldots 0)}^{A_{N-1}}$ are irreducible, while $S^{a_i}(\bigwedge^i\mathbb C^N)$ is not irreducible for every $i\geq 2$ and $a_i \geq 2$.\\
For $R_1$ and $R_2$ being $\SL(\mathbb C^{N_1})$- and $\SL(\mathbb C^{N_2})$-modules respectively, the tensor product $R_1\otimes R_2$ is a representation of $\SL(\mathbb C^{N_1})\times \SL(\mathbb C^{N_1})$. We recall that, as $\SL(\mathbb C^{N_1})\times \SL(\mathbb C^{N_1})$-module, $S^2(R_1\otimes R_2)$ decomposes into \cite[Section 6.5.2]{landsberg2011tensors}
\begin{equation}\label{eq:S2(A*B)}
S^2(R_1\otimes R_2) = (S^2R_1 \otimes S^2R_2) \oplus ({\textstyle \bigwedge^2R_1 \otimes \bigwedge^2R_2}) .
\end{equation}
If $R_1$ and $R_2$ irreducible, the above decomposition is into irreducible components as well.

\medskip

\indent Regarding representations of $\mathfrak{S}_4$, we refer to \cite[Section 2.3]{fulton2013representation} for details. There are five irreducible representations of $\mathfrak{S}_4$, parametrized by the partitions of $4$. They are:
\begin{itemize}
\item the $1$-dimensional trivial representation $[4]$;
\item the $3$-dimensional standard representation $[3,1]$;
\item the $2$-dimensional representation $[2,2]$;
\item the $3$-dimensional representation $[2,1,1]$;
\item the $1$-dimensional sign-representation $[1^4]$.
\end{itemize}
\noindent The order in which they appear above reflects the duality given by tensoring each module by the sign-representation: for instance, $[3,1] \otimes [1^4]=[2,1,1]$. Furthermore, the trivial representation $[4]$ is the unit element with respect to the tensor product, i.e., $[4]\otimes [\lambda] = [\lambda]$ for any partition $\lambda \vdash 4$. We will use the following well-known identities:
\begin{align}
{\textstyle \bigwedge^k}[3,1] & = [4-k,1^k] & \forall k=0,\ldots, 3 \label{eq:wedge[3,1]} \\
[3,1] \otimes [3,1] & = [4] \oplus [3,1] \oplus [2,2] \oplus [2,1,1] .\label{eq:[3,1]*[3,1]}
\end{align}
Observe that, since $[2,1,1] = [3,1] \otimes [1^4]$, the decomposition of $[2,1,1]\otimes [2,1,1]$ is the same as in \eqref{eq:[3,1]*[3,1]}. Finally, combining the equations \eqref{eq:wedge[3,1]} and \eqref{eq:[3,1]*[3,1]} yields
\begin{align}\label{eq:sym[3,1]}
S^2[3,1] & = [3,1]\otimes [3,1] - {\textstyle \bigwedge^2}[3,1] = [4] \oplus [3,1] \oplus [2,2] \nonumber \\
S^2[2,1,1] & = [4] \oplus [2,1,1] \oplus [2,2] .
\end{align}

\subsection{Decomposition of the second symmetric power}\label{subsec: decomposition}

\indent Now, we consider the decomposition of $S^2(\bigwedge^2U \otimes \bigwedge^2W)^\vee$ into irreducible $\SL(U)\times \SL(W)$-submodules. First, we decompose it as
\begin{align*}
\textstyle{S^2(\bigwedge^2U \otimes \bigwedge^2W)^\vee} & \stackrel{\eqref{eq:S2(A*B)}}{=}  \big[\underbrace{\textstyle{S^2(\bigwedge^2U)^\vee \otimes S^2(\bigwedge^2W)^\vee}}_{=: A}\big] \ \oplus \ \big[\underbrace{\textstyle{\bigwedge^2(\bigwedge^2U)^\vee \otimes \bigwedge^2(\bigwedge^2W)^\vee}}_{=: B}\big]  .
\end{align*}
\noindent Then, using the LiE package \cite{LiE} to compute the plethysms in $A,B$, for $\dim W = n-4 \geq 5$, we get
\begin{align*}
A & = \left( V^{A_3}_{(0,0,0)} \oplus V^{A_{3}}_{(0,2,0)} \right) \otimes \left( V^{A_{n-5}}_{(0,2,0, \ldots)} \oplus V^{A_{n-5}}_{(0,0,0,1,0, \ldots)} \right)  \\
B & =  V_{(1,0,1)}^{A_3}\otimes V_{(1,0,1,0, \ldots )}^{A_{n-5}}, 
\end{align*}
where in particular
\begin{itemize}
\item $V^{A_3}_{(0,0,0)}\simeq \mathbb C$ is $1$-dimensional and generated by the only $\SL(4)$-invariant of degree $2$, i.e., the Pfaffian $\Pf(\underline{e})$ of a $4\times 4$ skewsymmetric matrix $\underline{e}\in \bigwedge^2U$;
\item $V_{(1,0,1)}^{A_3}$ is the $15$-dimensional adjoint $\SL(4)$-representation $\mathfrak{sl}_4\subset U \otimes U^\vee$;
\item $V^{A_{n-5}}_{(0,0,0,1,0,\ldots)} = \bigwedge^4W = \big\langle \Pf([\underline{f}]_J) \ | \ J \in \binom{[n-4]}{4} \big\rangle_\mathbb C$ is spanned by the Pfaffians of the $4\times 4$ diagonal submatrices of a $(n-4)\times (n-4)$ skewsymmetric matrix $\underline{f}\in \bigwedge^2W$.
\end{itemize}
Therefore, we get
\begin{align}\label{eq:A+B}
A\oplus B & = \left(\Pf(\underline{e}) \otimes V^{A_{n-5}}_{(0,2,0, \ldots)}\right) \ \oplus \ \left( \Pf(\underline{e}) \otimes \textstyle{\bigwedge^4 W} \right) \ \nonumber \\
& \ \ \ \oplus \left( V^{A_{3}}_{(0,2,0)} \otimes V^{A_{n-5}}_{(0,2,0, \ldots)} \right) \ \oplus \ \left( V^{A_{3}}_{(0,2,0)} \otimes \textstyle{\bigwedge^4 W} \right) \ \oplus \ \left( \mathfrak{sl}_4 \otimes V_{(1,0,1,0, \ldots )}^{A_{n-5}} \right) .
\end{align}
\indent First, we focus on the action of $\mathfrak{S}_{[4]}$ on the first tensor-entry of each summand. As $\mathfrak{S}_{[4]}$-representation, straightforward computations using identities from Section \ref{subsec:preliminaries modules} show that the space $S^2(\bigwedge^2U)^\vee$  decomposes into
\[ \textstyle{S^2(\bigwedge^2U)^\vee} = 2  [4] \oplus 2 [3,1] \oplus 3  [2,2] \oplus 2 [2,1,1] \oplus [1^4] .\]
\noindent In particular, it contains a unique copy of the sign-representation $[1^4]$, which is generated by the Pfaffian $\Pf(\underline{e})$, so it coincides with the space $V_{(0,0,0)}^{A_3}$. Therefore, the $\SL(4)$-component $V_{(0,2,0)}^{A_3}$ does not contain the sign-representation. On the other hand, the component $V_{(1,0,1)}^{A_3}\simeq \mathfrak{sl}_4$ does not contain $[1^4]$ either, since as $\mathfrak{S}_4$-representation it decomposes into
\[ \mathfrak{sl}_4 = [4] \oplus 3[3,1] \oplus [2,2] \oplus [2,1,1] .\]
\noindent From \eqref{eq:action of S4 on PJ} we know that for every $J\in \binom{[n]\setminus[4]}{4}$, the $1$-dimensional span $\langle P_J \rangle_\mathbb C$ is isomorphic to the sign-representation $[1^4]$ as $\mathfrak{S}_{[4]}$-module. We deduce that the following containment has to hold:
\begin{equation}\label{eq:PJ in first two summands}
\langle P_J|J\rangle_{\mathbb C}\subset \Pf(\underline{e})\otimes \left( V_{(0,2,0,\ldots)}^{A_{n-5}}  \oplus  \textstyle{\bigwedge^4W}\right) .
\end{equation}

\indent Second, we aim to use similar arguments as above to exclude one of the two remaining summands. However, computing representations of $\mathfrak{S}_{n-4}$ for $n\geq 9$ is quite involved. Therefore, we look at the linear spaces $\langle P_J|J\rangle_\mathbb C$, $V_{(0,2,0,\ldots)}^{A_{n-5}}$ and $\bigwedge^4W$ as representations of $\mathfrak{S}_4 \simeq \mathfrak{S}_{J_0} \subset \mathfrak{S}_{[n]\setminus [4]}$ for a fixed subset $J_0\in \binom{[n]\setminus [4]}{4}$. More precisely, we aim to show that, as $\mathfrak{S}_4$-representations, the space $V_{(0,2,0,\ldots)}^{A_{n-5}}$ does not contain any sign-representation, while $\langle P_J|J\rangle_{\mathbb C}$ and $\bigwedge^4W$ contain the sign-representation with the same multiplicity. This will allow us to conclude in which summand the span of the $P_J$'s lies. For simplicity, we denote $m:= n-4 = \dim W$.

\begin{proposition}\label{prop:sign-rep in wedge4}
Consider the action of $\mathfrak{S}_4$ on $W = M \oplus N$ that fixes $M\simeq \mathbb C^{m-4}$ and permutes the basis vectors of $N\simeq \mathbb C^4$. Then the $\mathfrak{S}_4$-representation $\bigwedge^4W$ contains exactly $m-3$ copies of the sign-representation $[1^4]$.
\end{proposition}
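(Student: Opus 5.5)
We decompose $\bigwedge^4 W$ using $W = M\oplus N$:
\[ \textstyle{\bigwedge^4 W = \bigoplus_{k=0}^{4} \bigwedge^{4-k}M \otimes \bigwedge^k N}. \]
Since $\mathfrak{S}_4$ acts trivially on $M$, each summand is isomorphic, as an $\mathfrak{S}_4$-module, to $\binom{m-4}{4-k}$ copies of $\bigwedge^k N$. The multiplicity of $[1^4]$ in $\bigwedge^4 W$ is therefore
\[ \sum_{k=0}^{4}\binom{m-4}{4-k}\, \big\langle \textstyle{\bigwedge^k N},[1^4]\big\rangle . \]
It remains to determine, for each $k$, how many copies of the sign representation $\bigwedge^k N$ contains.

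The permutation representation satisfies $N = [4]\oplus[3,1]$. Hence
\[ \textstyle{\bigwedge^k N = \bigwedge^k[3,1] \oplus \bigwedge^{k-1}[3,1]}, \]
with the convention $\bigwedge^{-1}[3,1]=0$ and $\bigwedge^{4}[3,1]=0$. By \eqref{eq:wedge[3,1]} we have $\bigwedge^j[3,1]=[4-j,1^j]$ for $0\le j\le 3$, which gives
\[ \textstyle{\bigwedge^0N=[4],\quad \bigwedge^1N=[4]\oplus[3,1],\quad \bigwedge^2N=[3,1]\oplus[2,1,1],\quad \bigwedge^3N=[2,1,1]\oplus[1^4],\quad \bigwedge^4N=[1^4].} \]
So the sign representation occurs only for $k=3$ and $k=4$, each time with multiplicity one. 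The case $k=4$ can also be seen directly: $\bigwedge^4 N$ is spanned by $f_1\wedge f_2\wedge f_3\wedge f_4$, and a permutation acts on it by its sign.

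Substituting into the sum, the total multiplicity is
\[ \binom{m-4}{1}+\binom{m-4}{0} = (m-4)+1 = m-3. \]

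For a concrete description of these copies, take the basis vector $e\wedge f_{j_1}\wedge f_{j_2}\wedge f_{j_3}$ of $M\otimes\bigwedge^3 N$ with $e\in M$. The corresponding sign vectors are $e\wedge \sum_i (-1)^i f_1\wedge\cdots\widehat{f_i}\cdots\wedge f_4$, one for each basis vector $e$ of $M$. Together with $f_1\wedge f_2\wedge f_3\wedge f_4$ this gives $m-3$ independent sign vectors.

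The only point needing care is the identification $\bigwedge^3 N \cong [2,1,1]\oplus[1^4]$, and in particular that the sign copy is not lost. This is secured by $\bigwedge^3[3,1]=\bigwedge^{\mathrm{top}}[3,1]=\det[3,1]=[1^4]$. Everything else is bookkeeping.
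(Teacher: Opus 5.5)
Your argument is correct and is essentially the paper's. Both expand $\bigwedge^4$ of a direct sum and use $\bigwedge^k[3,1]=[4-k,1^k]$. The only difference is bookkeeping: the paper first absorbs the trivial summand of $N$ into $M$ and reads all multiplicities off $\bigwedge^4(\mathbb C^{m-3}\oplus[3,1])$ in one step, whereas you split along $M\oplus N$ first, track only the sign representation, and add a correct explicit basis of the sign-isotypic component.
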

\begin{proof}
As $\mathfrak{S}_4$-modules, we have the decompositions $M = \sum_{\ell =1}^{m-4}[4]_\ell$ and $N=[4]_0 \oplus [3,1]_0$, where $[4]_\ell, [4]_0$ are trivial representations and  $[3,1]_0$ is the standard representation of dimension $3$. For simplicity, we use the compact expression $\mathbb C^{m-3} = [4]_0 \oplus \sum_{\ell =1}^{m-4}[4]_\ell$. Applying the fourth exterior power we get
{\small
\begin{align*}  \textstyle{\bigwedge^4W} & \ = {\textstyle \bigwedge^4}\left(\mathbb C^{m-3} \oplus [3,1]_0 \right) \\
& \ =  {\textstyle \bigwedge^4}\mathbb C^{m-3} \oplus \left({\textstyle \bigwedge^3} \mathbb C^{m-3} \wedge [3,1]_0 \right) \oplus \left( {\textstyle \bigwedge^2} \mathbb C^{m-3} \wedge {\textstyle \bigwedge^2}[3,1]_0 \right) \oplus \left( \mathbb C^{m-3} \wedge {\textstyle \bigwedge^3}[3,1]_0 \right) \oplus {\textstyle \bigwedge^4}[3,1]_0  \\
& \stackrel{\eqref{eq:wedge[3,1]}}{=} {\textstyle \binom{m-3}{4}[4] \oplus \binom{m-3}{3}[3,1] \oplus \binom{m-3}{2}[2,1,1] \oplus (m-3)[1^4]} .
\end{align*}
}
\end{proof}

\begin{proposition}\label{prop:sign-rep in sym2wedge2}
Consider the action of $\mathfrak{S}_4$ on $W = F \oplus G$ that fixes $F\simeq \mathbb C^{m-4}$ and permutes the basis vectors of $G\simeq \mathbb C^4$. Then, as $\mathfrak{S}_4$-representation, the linear space $V_{(0,2,0,\ldots)}^{A_{n-5}}$ contains no copy of the sign-representation $[1^4]$.
\end{proposition}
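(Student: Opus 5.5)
The plan is to reduce the statement to a count of sign-representations in the full space $S^2(\bigwedge^2W)$ and then compare with Proposition~\ref{prop:sign-rep in wedge4}. As $\SL(W)$-modules (for $m=\dim W\geq 4$) we have the classical plethysm
\[
\textstyle S^2(\bigwedge^2W)=V^{A_{m-1}}_{(0,2,0,\ldots)}\oplus \bigwedge^4W ,
\]
which is exactly the factor appearing in the summand $A$ above. Dualizing does not change the count, since $\mathfrak{S}_4$-characters are real. Restricting this decomposition to $\mathfrak{S}_4\simeq\mathfrak{S}_{J_0}$, the multiplicity of $[1^4]$ in $V_{(0,2,0,\ldots)}^{A_{m-1}}$ equals the multiplicity of $[1^4]$ in $S^2(\bigwedge^2W)$ minus $m-3$, by Proposition~\ref{prop:sign-rep in wedge4}. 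It therefore suffices to show that $S^2(\bigwedge^2W)$ contains exactly $m-3$ copies of the sign-representation.

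To do this, we write $W=\mathbb C^{m-3}\oplus[3,1]_0$ as in the proof of Proposition~\ref{prop:sign-rep in wedge4}, where $\mathbb C^{m-3}$ is a sum of trivial representations. Using \eqref{eq:wedge[3,1]}, we get
\[
\textstyle\bigwedge^2W=\binom{m-3}{2}[4]\ \oplus\ \big(\mathbb C^{m-3}\otimes[3,1]\big)\ \oplus\ [2,1,1] .
\]
We then expand $S^2$ of this three-term sum into three symmetric squares and three cross tensor products, and track $[1^4]$ in each piece.

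\begin{itemize}
\item The symmetric square of the trivial part is trivial.
\item By \eqref{eq:S2(A*B)}, $S^2(\mathbb C^{m-3}\otimes[3,1])=(S^2\mathbb C^{m-3}\otimes S^2[3,1])\oplus(\bigwedge^2\mathbb C^{m-3}\otimes\bigwedge^2[3,1])$. Neither $S^2[3,1]=[4]\oplus[3,1]\oplus[2,2]$ from \eqref{eq:sym[3,1]} nor $\bigwedge^2[3,1]=[2,1,1]$ contains a sign.
\item $S^2[2,1,1]=[4]\oplus[2,1,1]\oplus[2,2]$ contains no sign.
\item The cross terms involving the trivial part are multiples of $[3,1]$ or $[2,1,1]$, so they contain no sign.
\item The remaining cross term is $(m-3)\,[3,1]\otimes[2,1,1]=(m-3)\,([3,1]\otimes[3,1])\otimes[1^4]$. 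By \eqref{eq:[3,1]*[3,1]} each copy contains exactly one $[1^4]$, coming from $[4]\otimes[1^4]$.
\end{itemize}

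Hence $S^2(\bigwedge^2W)$ contains exactly $m-3$ sign-representations, and $V_{(0,2,0,\ldots)}^{A_{m-1}}$ contains none.

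The main obstacle is making the first step rigorous rather than the bookkeeping. We must justify that $S^2(\bigwedge^2W)=V_{(0,2,0,\ldots)}\oplus\bigwedge^4W$ holds as $\SL(W)$-modules in the range considered. This can be taken from the plethysm computation already used for $A$, or seen directly: the Plücker-type map $S^2(\bigwedge^2W)\to\bigwedge^4W$ is surjective, and its kernel is the irreducible module with highest weight $2\omega_2$. A minor point is that restriction to the finite subgroup $\mathfrak{S}_{J_0}$ commutes with direct sums, so the subtraction of multiplicities is legitimate.
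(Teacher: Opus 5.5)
Your proposal is correct and follows the paper's proof. You identify $V_{(0,2,0,\ldots)}$ as the kernel of the wedge map $S^2(\bigwedge^2W)\to\bigwedge^4W$, write $W=\mathbb C^{m-3}\oplus[3,1]$, and find that all $m-3$ sign-representations of $S^2(\bigwedge^2W)$ come from the cross term $(m-3)\,[3,1]\otimes[2,1,1]$, matching the $m-3$ copies in $\bigwedge^4W$. Your treatment of $S^2(\mathbb C^{m-3}\otimes[3,1])$ via the $S^2(R_1\otimes R_2)$ formula is more careful than the paper's multiplicity bookkeeping. One slip, inherited from the paper, is harmless: since $[1^4]\otimes[1^4]=[4]$, one has $S^2[2,1,1]\cong S^2[3,1]=[4]\oplus[3,1]\oplus[2,2]$, not $[4]\oplus[2,1,1]\oplus[2,2]$, but neither contains a sign-representation.
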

\begin{proof}
The $\GL(W)$-representation $V_{(0,2,0,\ldots)}^{A_{m-1}}$ is the kernel of the $\GL(W)$-equivariant map  
\[ \begin{matrix}
S^2(\textstyle{\bigwedge^2W}) & \rightarrow & \textstyle{\bigwedge^4W} \\
\alpha\cdot \beta & \mapsto & \alpha \wedge \beta
\end{matrix} \ .\] 
Therefore, to prove the hypothesis, it is enough to show that the module $S^2(\bigwedge^2W)$ contains as many copies of $[1^4]$ as $\bigwedge^4W$. Again, we decompose $W=\sum_{\ell=1}^{m-4}[4]_\ell \oplus [4]_0 \oplus [3,1]_0= \mathbb C^{m-3}\oplus [3,1]$. \\
First, we compute the $2$nd exterior power of $W$.
\begin{align*}
{\textstyle \bigwedge^2}W & \  = {\textstyle \bigwedge^2}\left( \mathbb C^{m-3} \oplus [3,1]\right)  = {\textstyle \bigwedge^2}\mathbb C^{m-3} \oplus \left(\mathbb C^{m-3}\wedge [3,1] \right) \oplus {\textstyle \bigwedge^2}[3,1] \\
& \stackrel{\eqref{eq:wedge[3,1]}}{=} {\textstyle \binom{m-3}{2}}[4] \oplus (m-3)[3,1] \oplus [2,1,1] .
\end{align*}
Second, applying the $2$nd symmetric power, we get
\begin{align*}
S^2({\textstyle \bigwedge^2}W) & \ = S^2\left( {\textstyle \binom{m-3}{2}}[4] \oplus (m-3)[3,1] \oplus [2,1,1] \right) \\
& \ \simeq {\textstyle \binom{m-3}{2}^2}S^2[4] \oplus {\textstyle \binom{m-3}{2}^2}(m-3)[4]\cdot [3,1] \oplus {\textstyle \binom{m-3}{2}^2}[4]\cdot [2,1,1] \oplus  (m-3)^2 S^2[3,1] \\
& \ \quad \oplus (m-3)[3,1]\cdot [2,1,1] \oplus S^2[2,1,1] \\
& \stackrel{\eqref{eq:sym[3,1]}}{=} {\textstyle \binom{m-3}{2}^2}[4] \oplus {\textstyle \binom{m-3}{2}^2}(m-3)[3,1] \oplus {\textstyle \binom{m-3}{2}^2}[2,1,1] \oplus (m-3)^2\big( [4] \oplus [3,1] \oplus [2,2] \big) \\
& \ \quad \oplus (m-3)[3,1]\cdot [2,1,1] \oplus \big( [4] \oplus [2,1,1] \oplus [2,2] \big) .
\end{align*}
Therefore, all copies of $[1^4]$ inside $S^2(\bigwedge^2W)$ lie inside the summands $(m-3)[3,1]\cdot [2,1,1]$. To decompose this product, we use that $[2,1,1]=[3,1]\otimes [1^4]$ and the decomposition of $[3,1]\otimes [3,1]$:
\begin{align*}
(m-3)[3,1]\cdot [2,1,1] & \ = (m-3) [3,1] \otimes \big([3,1] \otimes [1^4] \big) \\
& \stackrel{\eqref{eq:[3,1]*[3,1]}}{=} (m-3)\big( [4] \oplus [3,1] \oplus [2,2] \oplus [2,1,1]\big) \otimes [1^4]\\
& \ = (m-3)\big( [1^4] \oplus [2,1,1] \oplus [2,2] \oplus [3,1] \big) .
\end{align*}
We conclude that $S^2(\bigwedge^2W)$ has $m-3$ copies of $[1^4]$, all of them coming from the component $\bigwedge^4W$. In particular, $V_{(0,2,0,\ldots)}^{A_{m-1}}$ contains no copy of the sign-representation.
\end{proof}

\begin{theorem}\label{thm:Pfaffians}
In the above notation, the quadrics $P_J$'s are linear combinations of tensor-product of Pfaffians. More precisely,
\[ \langle P_J|J\rangle_\mathbb C = \Pf(\underline{e}) \otimes \left\langle \Pf([\underline{f}]_J) \mid J\right\rangle_\mathbb C .\]
\end{theorem}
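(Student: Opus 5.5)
Starting from the containment \eqref{eq:PJ in first two summands}, the span $\langle P_J\mid J\rangle_\mathbb C$ already lies in $\Pf(\underline{e})\otimes\big(V_{(0,2,0,\ldots)}^{A_{n-5}}\oplus\bigwedge^4W\big)$. Since the first tensor factor is the fixed one-dimensional space spanned by $\Pf(\underline{e})$, I can write each $P_J=\Pf(\underline{e})\otimes(v_J+w_J)$ with $v_J\in V_{(0,2,0,\ldots)}^{A_{n-5}}$ and $w_J\in\bigwedge^4W$. The plan has two parts. First, show that every $v_J$ vanishes, which gives the inclusion $\subseteq$. Second, conclude equality by comparing dimensions.

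For $v_J=0$, fix $J$ and set $J_0=J$. By Example~\ref{example:action of SJ on PJ}, the group $\mathfrak S_{J_0}\cong\mathfrak S_4$ acts on $P_{J_0}$ by the sign character, so $\langle P_{J_0}\rangle_\mathbb C$ is a copy of $[1^4]$ for $\mathfrak S_{J_0}$. The decomposition $V_{(0,2,0,\ldots)}^{A_{n-5}}\oplus\bigwedge^4W$ is $\GL(W)$-equivariant, so the projection onto $V_{(0,2,0,\ldots)}^{A_{n-5}}$ is $\mathfrak S_{J_0}$-equivariant. It therefore sends $P_{J_0}$ to a vector that transforms by the sign character, i.e.\ into the $[1^4]$-isotypic part of $V_{(0,2,0,\ldots)}^{A_{n-5}}$. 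By Proposition~\ref{prop:sign-rep in sym2wedge2}, applied with $G$ spanned by the basis vectors indexed by $J_0$, this isotypic part is zero. Hence $v_{J_0}=0$. Since $J_0$ was arbitrary, every $P_J$ lies in $\Pf(\underline{e})\otimes\bigwedge^4W$. This is cleaner than the multiplicity comparison suggested in the text, because it uses the $\mathfrak S_{J}$-invariance of each single $P_J$ and not only of the whole span.

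For equality, both sides have dimension at most $\binom{n-4}{4}=\dim\bigwedge^4W$. The Pfaffians $\Pf([\underline f]_J)$ are linearly independent, since distinct $J$ use disjoint monomial supports in the Plücker-type variables, so the right-hand side has exactly this dimension. It remains to show the $P_J$ are linearly independent. From \eqref{eq:PJ}, $P_J$ contains the monomial $\psi_{12j_1j_2}\psi_{34j_3j_4}$, whose second-factor indices $\{j_1,j_2\}\cup\{j_3,j_4\}$ recover $J$. So distinct $P_J$ have disjoint supports: every monomial of $P_J$ is a product of two level-2 coordinates whose $W$-indices partition $J$. Linear independence follows. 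With equal dimensions and the inclusion, we get $\langle P_J\mid J\rangle_\mathbb C=\Pf(\underline e)\otimes\langle\Pf([\underline f]_J)\mid J\rangle_\mathbb C$.

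The main obstacle is the middle step. I need to check that the identification of $P_J$ inside $S^2(\bigwedge^2U\otimes\bigwedge^2W)^\vee$ is compatible with the $\mathfrak S_{J_0}$-actions, so that the sign character seen in \eqref{eq:action of S4 on PJ} is really the action restricted from $\GL(W)$ on the dual. I also need to check that dualization does not turn $[1^4]$ into another character. It does not, since permutation representations are self-dual and the sign character is real. With that verified, Proposition~\ref{prop:sign-rep in sym2wedge2} (and, if desired, Proposition~\ref{prop:sign-rep in wedge4} as a consistency check on multiplicities) finishes the argument.

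As an optional refinement, the support argument suggests that $P_J$ is in fact a scalar multiple of $\Pf(\underline e)\otimes\Pf([\underline f]_J)$ plus corrections from other $J'$. By the support disjointness, it should then be exactly proportional. This can be checked by matching the coefficient of one monomial, but it is not needed for the stated equality of spans.
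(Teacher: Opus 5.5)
Your proof is correct, and for the key inclusion it takes a tighter route than the paper.

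The paper argues at the level of the whole span. It asserts that $\langle P_J\mid J\rangle_{\mathbb C}$, being $\mathfrak{S}_4\times\mathfrak{S}_{[n]\setminus[4]}$-stable, must lie in a single one of the two summands of \eqref{eq:PJ in first two summands}. It then picks out $\bigwedge^4W$ by comparing $[1^4]$-multiplicities (Propositions~\ref{prop:sign-rep in wedge4} and~\ref{prop:sign-rep in sym2wedge2}) against the sign line $\langle P_{J_0}\rangle_{\mathbb C}$, and concludes equality ``by dimensional count''.

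That single-summand assertion is not automatic: an $\mathfrak{S}$-stable subspace can sit diagonally across two $\GL(W)$-summands that share $\mathfrak{S}$-constituents. Your approach avoids it. You apply the $\GL(W)$-equivariant projection to each $P_J$ separately, with $J_0=J$, and this needs only the vanishing statement of Proposition~\ref{prop:sign-rep in sym2wedge2}; Proposition~\ref{prop:sign-rep in wedge4} becomes a consistency check. Your disjoint-support argument also supplies the linear independence of the $P_J$, which the paper's dimension count leaves implicit. What the paper's multiplicity bookkeeping gives in return is the global picture of the subsequent Remark, which exhibits all $m-3$ sign copies inside the span.

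Two further remarks. First, there is a caveat you inherit from the text rather than introduce. The containment \eqref{eq:PJ in first two summands} is justified there by saying that the $\mathfrak{sl}_4$-summand contains no $\mathfrak{S}_{[4]}$-sign vector. However, as a $\GL(U)$-module $\bigwedge^2(\bigwedge^2U)\cong\mathfrak{sl}(U)\otimes\det U$, and permutation matrices do not lie in $\SL(U)$. So as an $\mathfrak{S}_{[4]}$-module this summand is the decomposition of $\mathfrak{sl}_4$ recalled in the text tensored with $[1^4]$, which contains $[1^4]$ exactly once. The containment is nevertheless true. In \eqref{eq:PJ} the monomials $\psi_{abpq}\psi_{cdrs}$ and $\psi_{abrs}\psi_{cdpq}$ always carry the same coefficient, so every $P_J$ has zero component in $\bigwedge^2(\bigwedge^2U)^\vee\otimes\bigwedge^2(\bigwedge^2W)^\vee$. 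The $\mathfrak{S}_{[4]}$-sign argument then only needs to be run inside $S^2(\bigwedge^2U)^\vee\otimes S^2(\bigwedge^2W)^\vee$, where it is valid. After that, your projection step applies unchanged.

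Second, your closing refinement is correct and stronger than the theorem. Once the inclusion holds, disjoint supports force $P_J$ to be a nonzero multiple of $\Pf(\underline{e})\otimes\Pf([\underline{f}]_J)$. Matching the $18$ signed monomials of \eqref{eq:PJ} against the expansion of this tensor confirms this directly, which proves the theorem (and the containment) with no representation theory at all. The tensor computation in Section~\ref{subsec:disconnected} is exactly this check. It uses only that $\psi_{abpq}$ corresponds to a multiple of $(e_a\wedge e_b)\otimes(f_p\wedge f_q)$, not the minor parametrization, so the factorization holds for the $P_J$ as they stand.
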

\begin{proof}
Since $\langle P_J|J\rangle_\mathbb C$ is $\mathfrak{S}_{4}\times \mathfrak{S}_{[n]\setminus [4]}$-invariant, it lies in only one of the summands of \eqref{eq:PJ in first two summands} considered as $\GL(U)\times \GL(W)$-representations. In light of Proposition~\ref{prop:sign-rep in wedge4} and Proposition~\ref{prop:sign-rep in sym2wedge2}, it is enough to prove that $\langle P_J|J\rangle_\mathbb C$ as $\mathfrak{S}_4$-representation contains at least one copy of $[1^4]$ in order to conclude that it lies in (and by dimensional count, actually coincides with) $\bigwedge^4W$. This can be immediately shown by fixing $J_0 \in \binom{[n]\setminus [4]}{4}$ and observing that the line $\langle P_{J_0}\rangle_\mathbb C$ is the sign-representation for the action of $\mathfrak{S}_{J_0}\simeq \mathfrak{S}_4$. 
\end{proof}

\begin{remark}
We exhibit all the $m-3$ copies of $[1^4]$ inside $\langle P_J|J\rangle_\mathbb C$ (as many as in $\bigwedge^4W$). Without loss of generality we can assume $J_0=\{5,6,7,8\} \in \binom{[n]\setminus [4]}{4}$. Consider the action of $\mathfrak{S}_4\simeq \mathfrak{S}_{J_0} \subset \mathfrak{S}_{[n]\setminus [4]}$ permuting the first four basis vectors of $W\simeq \mathbb C^{n-4}$ and fixing the remaining $n-8$ basis vectors. Consider the decomposition
\begin{align*}
    \langle P_J|J\rangle_{\mathbb C} & = \ \langle P_{J_0}\rangle_{\mathbb C} \ \oplus \ \langle P_J \mid |J\cap J_0|=3\rangle_{\mathbb C} \ \oplus \ \langle P_J \mid |J\cap J_0|=2\rangle_{\mathbb C} \  \\
    & \quad \oplus \ \langle P_J \mid |J\cap J_0|=1\rangle_{\mathbb C} \ \oplus \ \langle P_J \mid J\cap J_0=\emptyset \rangle_{\mathbb C} .
    \end{align*}
Each summand is invariant under the action of $\mathfrak{S}_{J_0}$. We study such action case by case.
\begin{enumerate}
\item For any $\tau \in \mathfrak{S}_{J_0}$ it holds $\tau \cdot P_{J_0} = \sign(\tau)P_{J_0}$, hence the line $\langle P_{J_0}\rangle_\mathbb C$ is isomorphic to the sign-representation $[1^4]$.
\item The summand $\langle P_J \mid J\cap J_0=\emptyset \rangle_{\mathbb C}$ is isomorphic to $\binom{n-8}{4}[4]$ since $\mathfrak{S}_{J_0}$ acts trivially on it.
\item Pick $P_{iabc}\in \langle P_J \mid |J\cap J_0|=1\rangle_{\mathbb C}$ where $i\in J_0$. For every transposition $(i \, j)\in \mathfrak{S}_{J_0}$ we get $(i \, j)\cdot P_{iabc} = P_{jabc}$. We deduce that no element of $\langle P_J \mid |J\cap J_0|=1\rangle_{\mathbb C}$ can generate a sign-representation. (What goes wrong here is that $(i \, j)\cdot P_{iabc} = (i \, j \, k)\cdot P_{iabc}$ -  see Example~\ref{example:action of SJ on PJ}.) 
\item Pick $P_{ijab} \in \langle P_J \mid |J\cap J_0|=2\rangle_{\mathbb C}$ where $\{i< j \} \in J_0$. For a $4$-cycle $(i \, j \, k \, h)\in \mathfrak{S}_{J_0}$ such that $j<k$ we get $(i \, j \, k \, h)\cdot P_{ijab} = P_{jkab}$. We deduce that no element of $\langle P_J \mid |J\cap J_0|=1\rangle_{\mathbb C}$ can generate a sign-representation. (What goes wrong here is that $(i \, j \, k \, h)\cdot P_{ijab} = (i \, j \, k)\cdot P_{ijab}$ -  see Example~\ref{example:action of SJ on PJ}.)
\item Fix $a \in \{9, \ldots , n\}$ and consider $\langle P_{ijka} \mid \{i,j,k\} \subset J_0\rangle_\mathbb C$. Since these $P_J$'s depend on three indices from $J_0$, a straightforward computation shows that for any $\tau \in \mathfrak{S}_{J_0}$ it holds $\tau \cdot P_{ijka} = \sign(\tau)P_{\tau(i)\tau(j)\tau(k)a}$. Let $X \in \langle P_{ijka} \mid \{i,j,k\}\subset J_0 \rangle_{\mathbb C}$: it is of the form
\[ X= \sum_{I \in \binom{J_0}{3}}\alpha_I P_{Ia} \]
for certain $\alpha_I \in \mathbb C$. The line $\langle X \rangle_\mathbb C$ is isomorphic to the sign-representation if and only if for any $\tau \in \mathfrak{S}_{J_0}$ it holds $\tau \cdot X = \sign(\tau)X$. In particular, $\langle X\rangle_\mathbb C\simeq [1^4]$ if and only if for any $\tau \in \mathfrak{S}_{J_0}$
\begin{align*}
\sign(\tau)\sum_I\alpha_IP_{\tau(I)a} = \sign(\tau) \sum_I \alpha_I P_{Ia} & \iff \quad \sum_{I} (\alpha_{\tau^{-1}(I)}-\alpha_I)P_{Ia} = 0 \\
& \iff \quad \alpha_{\tau^{-1}(I)} = \alpha_I  \quad \forall I,
\end{align*}
where the last double implication follows from the fact that the quadrics $P_{Ia}$ are linearly independent. Since the above co-implications apply to every $\tau \in \mathfrak{S}_{J_0}$, we decuce that $\langle X \rangle_{\mathbb C}\simeq [1^4]$ if and only if $X=\alpha \sum_I P_{Ia}$ (i.e., $\alpha_I=\alpha$ for all $I$). It follows that for every $a \in \{9,\ldots, n\}$ there is one sign-representation inside each submodule $\langle P_{ijka} \mid \{i,j,k\} \subset J_0\rangle_\mathbb C \subset \langle P_J \mid |J\cap J_0|=3\rangle_\mathbb C$. Therefore, there are $n-8$ copies of $[1^4]$ inside $\langle P_J \mid |J\cap J_0|=3\rangle_\mathbb C$.
\end{enumerate}
\indent We conclude that there are $m-3$ copies of $[1^4]$ inside $\langle P_J|J\rangle_\mathbb C$ as $\mathfrak{S}_{J_0}$-representation: one from $\langle P_{J_0}\rangle_\mathbb C$, and the other $m-4$ ones from $\langle P_J \mid |J\cap J_0|=3\rangle_\mathbb C$.
\end{remark}

\subsection{Example: $4$ electrons in $8$ spin-orbitals}

We give an explicit geometric description of the $36$-dimensional truncation variety $V_{\{2\}} \subset \mathbb P\big(\bigwedge^4\mathbb C^8\big)$.\\
\indent The variety $V_{\{2\}}$ is defined by the ideal in $\mathbb C[\bigwedge^4 (\mathbb C^8)]$, i.e.,
\[ \mathcal{I}(V_{\{2\}}) = \left( \ \psi_{i j_1j_2j_3}, \ \psi_{i_1i_2i_3 j}, \ \psi_{1234}\psi_{5678} - P_{5678}(\psi) \ \right):\psi_{1234}^\infty, \]
where $i \in [4]$, $\{i_1,i_2,i_3\}\in \binom{[4]}{3}$, $j \in \{5,\ldots , 8\}$ and $\{j_1,\ldots , j_3\} \in \binom{\{5,\ldots , 8\}}{3}$. According to notation in Section~\ref{sec:complete intersections}, we write $\mathbb C^8 = U \oplus W$ where $U=\text{Span}(e_1,\ldots, e_4)$ and $W=\text{Span}(e_5,\ldots , e_8)$. Since $\psi_{ij_1j_2j_3}, \psi_{i_1i_2i_3j}\in \mathcal{I}(V_{\{2\}})$, we know that $V_{\{2\}}$ is non-degenerate in the linear space
\[ \mathbb P \left( \textstyle{ \bigwedge^4U \oplus \big( \bigwedge^2U \otimes \bigwedge^2 W \big) \oplus \bigwedge^4W} \right) \simeq \mathbb P^{37} .\]

\noindent Recall that we denote the truncation variety in this minimal embedding by $\mathscr{V}_{\{2\}}$. In particular, $\mathscr{V}_{\{2\}}$ is a hypersurface, hence saturation is not needed and $\mathcal{I}(\mathscr{V}_{\{2\}})=\mathcal{I}(V_Q)$. Therefore, it is the $36$-dimensional smooth quadric
\[ \mathscr{V}_{\{2\}} = \text{V}\big(\psi_{1234}\psi_{5678} - P_{5678}(\psi)\big) \subset \mathbb P \left( \textstyle{ \bigwedge^4U \oplus \big( \bigwedge^2U \otimes \bigwedge^2 W \big) \oplus \bigwedge^4W} \right) .\]

\indent The quadratic equation vanishes at the points $[e_{1234}] , [e_{5678}]$. However, note that any other point on the line $\ell([e_{1234}],[e_{5678}])$ does not lie in $\mathscr{V}_{\{2\}}$. Indeed, for $[\alpha:\beta]\in \mathbb P^1\setminus \{[0:1],[1:0]\}$ the evaluation of the quadric at $[\alpha e_{1234} + \beta e_{5678}]$ is $\alpha\cdot \beta \neq 0$. On the other hand, since $W$ is four-dimensional, from Theorem~\ref{thm:Pfaffians} we deduce that the $34$-dimensional linear section
\[ \mathscr{V}_{\{2\}}\cap \mathbb P\left(\textstyle{\bigwedge^2U \otimes \bigwedge^2W}\right) = \text{V}( P_{5678} ) = \text{V}\!\left(\Pf(\underline{e})\otimes \Pf(\underline{f}) \right) \]
is the zero locus of the unique invariant $\bigwedge^4U\otimes \bigwedge^4W = V_{(0,0,0)}^{A_3}\otimes V_{(0,0,0)}^{A_3}\simeq \mathbb C$ in the module $S^2\big( \bigwedge^2U \otimes \bigwedge^2W \big)^\vee$. Therefore, $\mathscr{V}_{\{2\}}$ contains the $35$-dimensional cones 
\[ \join\big([e_{1234}] , \text{V}\big( \Pf(\underline{e})\otimes \Pf(\underline{f})\big) \big) \cup \join\big([e_{5678}] , \text{V}\big( \Pf(\underline{e})\otimes \Pf(\underline{f})\big) \big) .\]
Finally, a point $z \in \mathscr{V}_{\{2\}}$ not lying on these cones is such that $P_{5678}(z) = z_{1234}z_{5678} \neq 0$. These points lie along lines spanned by one point in $\mathbb P(\bigwedge^2U\otimes \bigwedge^2W)\setminus \text{V}(\Pf(\underline{e})\otimes \Pf(\underline{f}))$ and one in $\ell([e_{1234}],[e_{5678}])\setminus \{[e_{1234}],[e_{5678}]\}$. Let $[a]$ and $[b]=[e_{1234} + b_0e_{5678}]$ be two such points respectively. A point on $\ell([a],[b])$ is of the form $[c]=[a+\lambda b]$ and it lies on $\mathscr{V}_{\{2\}}$ if and only if $\lambda^2 = b_0^{-1}P_{5678}(a)$. It follows that 
\[ \ell([a],[b]) \cap \mathscr{V}_{\{2\}} = \{ \text{pt}_1, \text{pt}_2\} .\]
One can see this also by Bèzout's theorem: either a line lies in a quadric or they intersect in two points (with multiplicity). 

\subsection{The tensor product of Pfaffians}\label{subsec:disconnected}

In the previous subsection we showed that the quadrics $P_J$ can be written as linear combinations of tensor products of Pfaffians. A natural question is whether this description can be sharpened, namely, whether each $P_J$ itself is a simple tensor product of Pfaffians. While such a factorization does not hold on the full doubles truncation variety, it does emerge in a distinguished and conceptually transparent limit that, in coupled cluster terminology, corresponds to the \emph{disconnected doubles} contribution. 

Recall that at the doubles excitation level, the exponential ansatz produces two types of terms: connected doubles, governed by genuine doubles amplitudes, and disconnected doubles, generated as products of two independent single excitations through the quadratic term in the exponential expansion. If one suppresses the connected doubles parameters and retains only singles, then the induced double excitation coefficients are given by the canonical antisymmetrized products of singles amplitudes. Algebraically, these coefficients are Pfaffians in the relevant indices. In this disconnected regime the quadrics $P_J$'s therefore factor as tensor products of Pfaffians, rather than appearing only as linear combinations. Although this regime is not intended as a practical CCD approximation, it provides a transparent limiting case in which the Pfaffian factorization is exact.

Consider once again the decomposition $V = U \oplus W$ from Section~\ref{sec:complete intersections}. For convenience, we denote $(e_1,e_2,e_3, e_4)$ a basis of $U$ and $(f_5,\ldots , f_n)$ a basis of $W$.
\noindent For any $\{i_1,i_2\}\in \binom{[4]}{2}$ and $\{j_1,j_2\}\in \binom{[n]\setminus [4]}{2}$, following \cite[Sec.\ 2.7.2]{landsberg2011tensors}, we identify $\psi_{i_1i_2,j_1j_2}$ with the $2\times 2$ minor of the generic matrix $Z=(z_{ij}) \in U\otimes W$ determined by the rows $i_1,i_2\in [4]$ and columns $j_1,j_2\in [n]\setminus [4]$:
\begin{align}\label{eq:t's as minors}
\psi_{i_1i_2,j_1j_2} & = z_{i_1j_1}z_{i_2j_2} - z_{i_1j_2}z_{i_2j_1} \nonumber \\
& = (e_{i_1}\otimes f_{j_1})(e_{i_2}\otimes f_{j_2}) - (e_{i_1}\otimes f_{j_2})(e_{i_2}\otimes f_{j_1}) \nonumber \\
& = 2(e_{i_1}\wedge e_{i_2})\otimes (f_{j_1}\wedge f_{j_2})  \in \textstyle{\bigwedge^2}U \otimes \textstyle{\bigwedge^2}W,
\end{align}
where the second equality is obtained by looking at the entry $z_{pq}$ has the coefficient of the element $e_p\otimes f_q \in U\otimes W$. We denote the skewsymmetric matrices $\underline{e} \in \bigwedge^2U$ and $\underline{f}\in \bigwedge^2W$. This is a combinatorial identification that, in geometric perspective, is equivalent to impose that the variables $\psi_{ijpq}$'s are in the image of the rational map 
\[ \mathbb P(U \otimes W)\dashrightarrow \mathbb P\left(\textstyle{\bigwedge^2U \otimes \bigwedge^2W}\right)\]
mapping a $4\times(n-4)$ matrix to the vector of its $2\times 2$ minors. 

\begin{remark}
This description does not apply in general. In fact, it would imply extra equations on the variety $V_P=V(P_J \mid J \in \binom{[n]\setminus [4]}{4}) \subset \mathbb P(\bigwedge^2U \otimes \bigwedge^2W)$, resulting actually in a subvariety of the truncation variety that we are interested in. The Zariski closure of the image of the above rational map is an example of {\em compound/Kalman variety} \cite{kalman2025} or {\em $2$nd collineation variety of tensors} \cite{gesmundo2025collineation}.
\end{remark}

\begin{remark}
Using \eqref{eq:FCI} and \eqref{eq:cpsi}, we can look at the identification of the variables $\psi_{i_1i_2j_1j_2}$ as $2\times 2$ minors in terms of creation-annihilation transformations. Indeed, using formulas \eqref{eq: def of X_ij} and \eqref{eq: def of X_IJ} one can show that, for any $i< j \in [4]$ and $p< q \in [n]\setminus [4]$, it holds
\[ 
X_{ip}X_{jq}e_{[4]} = - X_{iq}X_{jp}e_{[4]} \implies \big(X_{ip}X_{jq} - X_{iq}X_{jp}\big) e_{[4]} = 2 X_{ij,pq}e_{[4]} .
\]
Such relations hold also when applying the above excitation matrices to another state $e_I$ for $I \in \binom{[n]}{4}$ such that $I \neq [4]$. Indeed, if $\{i,j\} \not \subset I$ then both sides of the equation vanish. On the other hand, if $I=\{i < j < a < b\}$ then both sides give
\begin{align*}
(X_{ip}X_{jq} - X_{iq}X_{jp})e_I & = X_{ip}(e_{q}\wedge (-1)e_{iab}) - X_{iq}(e_p \wedge (-1)e_{iab}) \\
& = X_{ip}(e_i\wedge e_q \wedge e_a \wedge e_b) - X_{iq}(e_i \wedge e_p \wedge e_a \wedge e_b) \\
& = 2 (e_p \wedge e_q \wedge e_a \wedge e_b) \\
2X_{ij,pq}e_I & = 2 X_{i,p}(e_i \wedge e_q \wedge e_a \wedge e_b) = 2 (e_p \wedge e_q \wedge e_a \wedge e_b) .
\end{align*}
\end{remark}

Now, fix $J=\{j_1,j_2,j_3,j_4\}\in \binom{[n]\setminus [4]}{4}$ and denote by $[\underline{f}]_{J}$ the $4\times 4$ submatrix of $\underline{f}$ determined by rows and columns indexed by $J$. In the following computation we use that the symmetric product $T\cdot S$ of two tensors is by definition $\frac{1}{2}(T\otimes S + S\otimes T)$ (see $\clubsuit$), and that $E \otimes F \otimes E \otimes F\simeq E \otimes E \otimes F \otimes F$ for any $E,F$ vector spaces (see $\spadesuit$). Then starting from \eqref{eq:t's as minors}, one gets
\newpage
{\scriptsize
\begin{align}\label{eq:binomials in t's}
    & \psi_{i_1i_2j_1j_2}\psi_{i_3i_4j_3j_4} + \psi_{i_1i_2j_3j_4}\psi_{i_3i_4j_1j_2} \nonumber \\
    & \! \! \! \stackrel{\eqref{eq:t's as minors}}{=} 4\bigg( \big[ (e_{i_1}\wedge e_{i_2})\otimes (f_{j_1}\wedge f_{j_2}) \big] \cdot \big[ (e_{i_3}\wedge e_{i_4})\otimes (f_{j_3}\wedge f_{j_4}) \big] + \big[ (e_{i_1}\wedge e_{i_2})\otimes (f_{j_3}\wedge f_{j_4}) \big] \cdot \big[ (e_{i_3}\wedge e_{i_4})\otimes (f_{j_1}\wedge f_{j_2}) \big] \bigg) \nonumber \\ 
    & \stackrel{\clubsuit}{=} 4\bigg( \ 
        \frac{1}{2} \bigg[ (e_{i_1}\wedge e_{i_2})\otimes (f_{j_1}\wedge f_{j_2}) \otimes (e_{i_3}\wedge e_{i_4})\otimes (f_{j_3}\wedge f_{j_4}) 
        +  (e_{i_3}\wedge e_{i_4})\otimes (f_{j_3}\wedge f_{j_4}) \otimes (e_{i_1}\wedge e_{i_2})\otimes (f_{j_1}\wedge f_{j_2}) \bigg] \  \nonumber\\
        & \quad + \ \frac{1}{2}\bigg[ (e_{i_1}\wedge e_{i_2})\otimes (f_{j_3}\wedge f_{j_4}) \otimes (e_{i_3}\wedge e_{i_4})\otimes (f_{j_1}\wedge f_{j_2}) 
        +  (e_{i_3}\wedge e_{i_4})\otimes (f_{j_1}\wedge f_{j_2}) \otimes (e_{i_1}\wedge e_{i_2})\otimes (f_{j_3}\wedge f_{j_4}) \bigg] \ \bigg) \nonumber\\
    & \stackrel{\spadesuit}{=}  2 \bigg( \ 
        \bigg[ (e_{i_1}\wedge e_{i_2}) \otimes (e_{i_3}\wedge e_{i_4})\otimes (f_{j_1}\wedge f_{j_2})\otimes (f_{j_3}\wedge f_{j_4}) 
        +  (e_{i_3}\wedge e_{i_4}) \otimes (e_{i_1}\wedge e_{i_2})\otimes (f_{j_3}\wedge f_{j_4})\otimes (f_{j_1}\wedge f_{j_2}) \bigg] \  \nonumber\\
        & \quad + \ \bigg[ (e_{i_1}\wedge e_{i_2}) \otimes (e_{i_3}\wedge e_{i_4})\otimes (f_{j_3}\wedge f_{j_4})\otimes (f_{j_1}\wedge f_{j_2}) 
        +  (e_{i_3}\wedge e_{i_4}) \otimes (e_{i_1}\wedge e_{i_2})\otimes (f_{j_1}\wedge f_{j_2})\otimes (f_{j_3}\wedge f_{j_4}) \bigg] \ \bigg) \nonumber\\
    & =  2\bigg( \ 
        \bigg[ (e_{i_1}\wedge e_{i_2}) \otimes (e_{i_3}\wedge e_{i_4}) +  (e_{i_3}\wedge e_{i_4}) \otimes (e_{i_1}\wedge e_{i_2}) \bigg] \otimes \bigg[ (f_{j_1}\wedge f_{j_2})\otimes (f_{j_3}\wedge f_{j_4}) + (f_{j_3}\wedge f_{j_4})\otimes (f_{j_1}\wedge f_{j_2}) \bigg] \ \bigg) \nonumber\\
    & \stackrel{\clubsuit}{=} 2\bigg( \ \bigg[ 2 (e_{i_1}\wedge e_{i_2}) \cdot (e_{i_3}\wedge e_{i_4}) \bigg] \otimes \bigg[ 2 (f_{j_1}\wedge f_{j_2})\cdot (f_{j_3}\wedge f_{j_4}) \bigg] \ \bigg) \nonumber\\
    & = 8 \big[ \ (e_{i_1}\wedge e_{i_2}) \cdot (e_{i_3}\wedge e_{i_4}) \otimes (f_{j_1}\wedge f_{j_2})\cdot (f_{j_3}\wedge f_{j_4}) \ \big] .
    \end{align}
    }

\noindent Combining the expression of a $4\times 4$ Pfaffian in \eqref{eq:Pfaffian as tensor} and the equality \eqref{eq:binomials in t's} leads to
{\footnotesize
\begin{align*}
& 8\Pf(\underline{e})\otimes \Pf([\underline{f}]_J) \nonumber \\
& = 8 \bigg( (e_{1}\wedge e_2)(e_3\wedge e_4) - (e_1\wedge e_3)(e_2\wedge e_4) + (e_1\wedge e_4)(e_2\wedge e_3) \bigg) \\
& \quad \otimes 
\bigg( (f_{j_1}\wedge f_{j_2})(f_{j_3}\wedge f_{j_4}) - (f_{j_1}\wedge f_{j_3})(f_{j_2}\wedge f_{j_4}) + (f_{j_1}\wedge f_{j_4})(f_{j_2}\wedge f_{j_3}) \bigg) \nonumber \\
& = \psi_{12j_1j_2}\psi_{34j_3j_4} + \psi_{12j_3j_4}\psi_{34j_1j_2} - \psi_{12j_1j_3}\psi_{34j_2j_4} - \psi_{12j_2j_4}\psi_{34j_1j_3} + \psi_{12j_1j_4}\psi_{34j_2j_3} + \psi_{12j_2j_3}\psi_{34j_1j_4} \nonumber \\
& \quad  - \psi_{13j_1j_2}\psi_{24j_3j_4} - \psi_{13j_3j_4}\psi_{24j_1j_2} + \psi_{13j_1j_3}\psi_{24j_2j_4} + \psi_{13j_2j_4}\psi_{24j_1j_3} - \psi_{13j_1j_4}\psi_{24j_2j_3} - \psi_{13j_2j_3}\psi_{24j_1j_4} \nonumber \\
& \quad + \psi_{14j_1j_2}\psi_{23j_3j_4} + \psi_{14j_3j_4}\psi_{23j_1j_2} - \psi_{14j_1j_3}\psi_{23j_2j_4} - \psi_{14j_2j_4}\psi_{23j_1j_3} + \psi_{14j_1j_4}\psi_{23j_2j_3} + \psi_{14j_2j_3}\psi_{23j_1j_4} .\nonumber 
\end{align*}
}
\noindent Comparing with the definition of $P_J$ in \eqref{eq:PJ} we obtain
\begin{equation*}\label{eq:PJ as Pfaffian}
P_J = 8\Pf(\underline{e})\otimes \Pf([\underline{f}]_J),
\end{equation*}
showing that for disconnected doubles the quadrics defining the CCD truncation variety are indeed pure tensor products of Pfaffians.

\section{The Coupled Cluster Doubles Degree}\label{sec:ccd}

The truncation variety serves as a mean to simplify the high-dimensional Schrödinger equation \eqref{eq:schroedinger} (see Section~\ref{sec:background}). The \emph{coupled cluster (CC) degree} of the truncation variety $V_{\{2\}}$ is defined as the number of solutions to the (truncated) nonlinear eigenvalue problem
\begin{equation}
\label{eq:CCeq}
    (H\psi)_2 = \lambda\psi_2, \quad \psi \in V_{\{2\}},
\end{equation}
known as the unlinked CC equations. To access the CC degree, the Hamiltonian $H$ is assumed to be a generic $\binom{n}{4} \times \binom{n}{4}$ matrix indexed by the subsets of $n$ of size $4$ and $(\cdot)_2$ denotes the projection to coordinates indexed by subsets $I$ of level $|I \backslash [4]| = 2$. 

By \cite[Theorem~5.2]{faulstich2024algebraic} we have an upper bound for the CC degree of a variety, using its dimension and degree. We know that $\dim( V_{\{2\}})= 6 \cdot \binom{n - 4}{2}$ and have numerically verified that $\deg V_{\{2\}} = 2^{n-4 \choose 4}$ for $n \leq 12$.
Hence, in these cases,

\begin{equation}\label{eq:bound}
    \operatorname{CCdeg}_{4,n}(\{2\}) \le (\dim( V_{\{2\}}) + 1)\deg(V_{\{2\}}) = \left( 6 \cdot \binom{n - 4}{2} +1 \right) 2^{n-4 \choose 4}.
\end{equation}

\begin{example}
\label{ex: CC bound}
CC degrees of $V_{\{2\}}$ for $n = 8,9,10$ and their corresponding upper bounds.

\begin{center}
    \begin{tabular}{c|l|l|l|l}
    \toprule
         n & $\dim V_{\{2\}}$ & $\deg V_{\{2\}}$ & bound & $\CCdeg_{4,n}(\{2\})$  \\
    \midrule
         8 & 36 & $2^1=2$ & 74 & 73 \\
        9 & 60 & $2^5=36$ & 1{,}925 & 1{,}823 \\
        10 & 90 & $2^{15}=32768$ & 2{,}981{,}888 & 2{,}523{,}135 \\
    \bottomrule
    \end{tabular}
\end{center}

\end{example}

\noindent While we are able to numerically compute the CC degree for small $n$ (see Example~\ref{ex: CC bound}), such explicit computations rapidly become numerically intractable. Hence, we require a numerically more tractable bound for practical purposes.
By \cite[Theorem~26]{sverrisdottir2025algebraic}, we can also define the CC degree of $V_{\{2\}}$ to be the total degree of the graph of the restricted map 
\[ \begin{matrix}
    \mathcal V_{\{2\}} & \longrightarrow & \mathcal H \\
    x & \mapsto &  \psi = \exp T(x) e_{[4]} 
    \end{matrix} \quad \textrm{with} \quad \psi_I
= \begin{cases}
    1 & \text{if $I$ is of level $0$ (i.e., $I=[4]$),}\\
    x_{I} & \text{if $I$ is of level $2$,}\\
    P_I(x) & \text{if $I$ is of level $4$ (i.e., $I \in \binom{[n]\setminus [4]}{4}$)}, \\
    0 & \text{otherwise.}
\end{cases} 
\]
Let $m= 6 \cdot {{n-4} \choose 2}$ be the number of level 2 coordinates and $s={{n-4} \choose 4}$ be the number of level 4 coordinates.
Since some $\psi_I$ are zero (at level $1$ and $3$), it suffices to look at the corresponding projective map with restricted image:
$$ \varphi: \mathbb P^m  \dashrightarrow \mathbb P^{m+s} $$
such that
\[ \begin{matrix}
\varphi: & \mathbb P^m \setminus \textrm{V}(x_{[4]}) & \longrightarrow & \mathbb P^{m+s} \\
& x & \mapsto & \psi 
\end{matrix} \quad \text{with} \quad \psi_I = \begin{cases}
    x_{[4]}^2 & \text{if $I$ is of level $0$,}\\
    x_{I}x_{[4]} & \text{if $I$ is of level $2$,}\\
    P_I(x) & \text{if $I$ is of level $4$}.
\end{cases} 
\]
The graph of $\varphi$ is $G:= \{(x,\varphi(x)) \mid x \in \mathbb P^m \} \subset \mathbb P^m \times \mathbb P^{m+s}$ and its dimension is $m$. Its image is $\text{im}(\varphi) = V_{\{2\}}$, which has dimension $m$ as well. In order to obtain a better understanding of what the degree of this graph might be, we start out with degrees of generic graphs of this form. Based on computations carried out with the code in \cite{Faulstich2025SupplementaryCode}, we conjecture the following:

\begin{conjecture}
Consider the rational map $\gamma: \mathbb P^m \dashrightarrow \mathbb P^{m+s}$ such that 
\[ \begin{matrix}
\gamma: & \mathbb P^m \setminus \textrm{V}(z_0) & \longrightarrow & \mathbb P^{m+s}\\
& z & \mapsto & y 
\end{matrix} \quad \text{with} \quad y_i = \begin{cases}
    z_0^2 & \text{if }i=0,\\
    z_0z_i & \text{if } i \in \{1,\ldots,m\},\\
    \gamma_i(z) & \text{if } i \in \{ m+1, \ldots, m+s\},
\end{cases}
\]
where $\gamma_i(z)$ are general homogeneous quadratic polynomials for $i=m+1,\ldots, m+s$. Then the graph of $\gamma$ has degree
\[ \deg(\text{graph}(\gamma)) = \begin{cases}
(m - s + 2)2^s -1 & \text{if } s<m, \\
2^{m+1}-1 & \text{if } s \geq m.
\end{cases} \]
\end{conjecture}

\noindent Since $V_P$ is a complete intersection for $n \leq 12$, this would suggest the following:

\begin{conjecture}\label{conj: CC degree}
    For $d=4$ and $n \leq 12$, the CC degree of the truncation variety $V_{\{2\}}$ is 
    $$ \CCdeg_{4,n}(\{2\}) =
    (m - s + 2)2^s -1,$$ where $m = 6{{n-4}\choose 2}$ and $s={{n-4}\choose 4}$.
\end{conjecture}

\noindent Indeed, this is confirmed in Example~\ref{ex: CC bound} for small instances of $n \leq 10$. If the conjecture is true, this implies that for $n=11$ the CC degree is $ 3{,}195{,}455{,}668{,}223$ and for $n=12$ it is $1.18 \cdot 10^{23}$.
\begin{center}
    \begin{tabular}{c|l|l|l|l|l}
         n & m & s & $(m-s+2)2^s-1$ & $\CCdeg$ & bound \\
         \hline
         8 & 36 & 1 & 73 & 73 & 74\\
        9 & 60 & 5 & 1{,}823 & 1{,}823 & 1{,}935\\
        10 & 90 & 15 & 2{,}523{,}135 & 2{,}523{,}135 & 2{,}981{,}888 \\
        11 & 126 & 35 & 3{,}195{,}455{,}668{,}223 &? & 4{,}363{,}686{,}772{,}736\\
        12 & 168 & 70 & 118{,}059{,}162{,}071{,}741{,}130{,}342{,}399 & ?& 199{,}519{,}983{,}901{,}242{,}510{,}278{,}656
    \end{tabular}
\end{center}

\section{Quantum Chemistry Simulation}\label{Section 5}

The algebraic results derived above for the truncation variety apply to four electrons in up to twelve spin-orbitals, that is, a $\mathrm{CAS}(4,6)$ in quantum chemistry parlance. This is the smallest electronic regime in which the CCD truncation becomes genuinely nonlinear and already exhibits nontrivial correlation effects, while still being small enough that full solution sets can be enumerated and validated. The purpose of this section is to connect these structural statements to concrete simulation scenarios and to explain how they inform practice, for instance, by anticipating the multiplicity and qualitative nature of CCD solutions and by indicating when iterative solvers may be affected by multiple roots or singular behavior. The simulations presented here were performed using the software packages \texttt{HomotopyContinuation.jl}~\cite{breiding2018homotopycontinuation} and \texttt{PySCF}~\cite{sun2015libcint,sun2018pyscf,sun2020recent}.

While lithium hydride and the H$_4$ model are common four-electron systems to investigate the full CC solution set using algebraic geometry techniques, here we turn to a chemically more challenging case to interrogate the underlying algebraic structure: the beryllium insertion into molecular hydrogen. As the beryllium atom (Be) approaches the hydrogen molecule (H$_2$), an H--Be--H bond forms, and the electronic structure evolves from an essentially weak interaction between subsystems to a strongly coupled bonding regime. It is well-documented that this crossover amplifies multireference effects and can severely stress single-reference correlation methods~\cite{purvis1983c2v}. At the same time, the system remains small enough that the full CCD solution set can be examined in detail, making it a useful test case relating the geometry of the truncation variety $V_{\{2\}}$ to observed computational behavior of computationally challenging systems.

\begin{figure}[htbp]
    \centering
    \includegraphics[width = 0.6\textwidth]{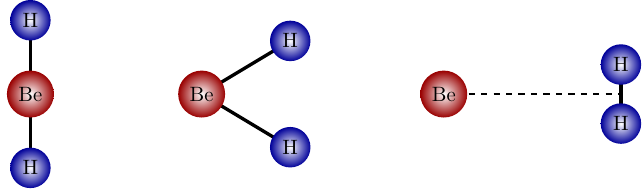}
    \caption{Depiction of the C$_{2v}$ insertion pathway for BeH$_2$. From left to right: X-position equal to 0, 3.7, and 5.0.}
    \label{fig:BeH2_mod}
\end{figure}

Specifically, we consider the C$_{2v}$ insertion pathway for BeH$_2$ (Be $\cdots$ H$_2$). We denote the position of
the beryllium atom by X-position, where X-position equal to zero corresponds to the linear equilibrium state and X-position equal to five corresponds to the non-interacting subsystems, see Figure~\ref{fig:BeH2_mod}. Along this pathway, the transition region features a switch in the dominant determinant: two leading determinants govern different segments of the potential energy surface and become nearly degenerate near the transition geometry. This near degeneracy leads to apparent discontinuities if one insists on a fixed reference description. Smoother potential energy surfaces can be obtained by explicitly switching the reference across the critical region~\cite{evangelista2011alternative,bodenstein2020state}. In this work, however, we are interested in the full CC solution set across this transition. By tracking all solutions through the near-degeneracy where the dominant determinant changes, we expose the associated algebraic structure beyond a single adiabatically followed CC root.

A complementary qualitative perspective is provided by the frontier molecular orbitals in Figure~\ref{fig:MOsBeH2}. In the insertion and dissociation limits, the highest occupied molecular orbital (HOMO) and lowest unoccupied molecular orbital (LUMO) retain comparatively clean and weakly mixed character, consistent with a dominant reference configuration. Near $X=3.7$, by contrast, the frontier orbitals delocalize across the forming bond and change character, reflecting a reduced HOMO--LUMO gap and enhanced configuration mixing. This is precisely the regime in which the CCD residual map becomes more nonlinear and more ill-conditioned, so that multiple nearby roots and solver pathologies such as slow convergence, root switching, or divergence become likely.

\begin{figure}[htbp]
    \centering
    \includegraphics[width=0.7\linewidth]{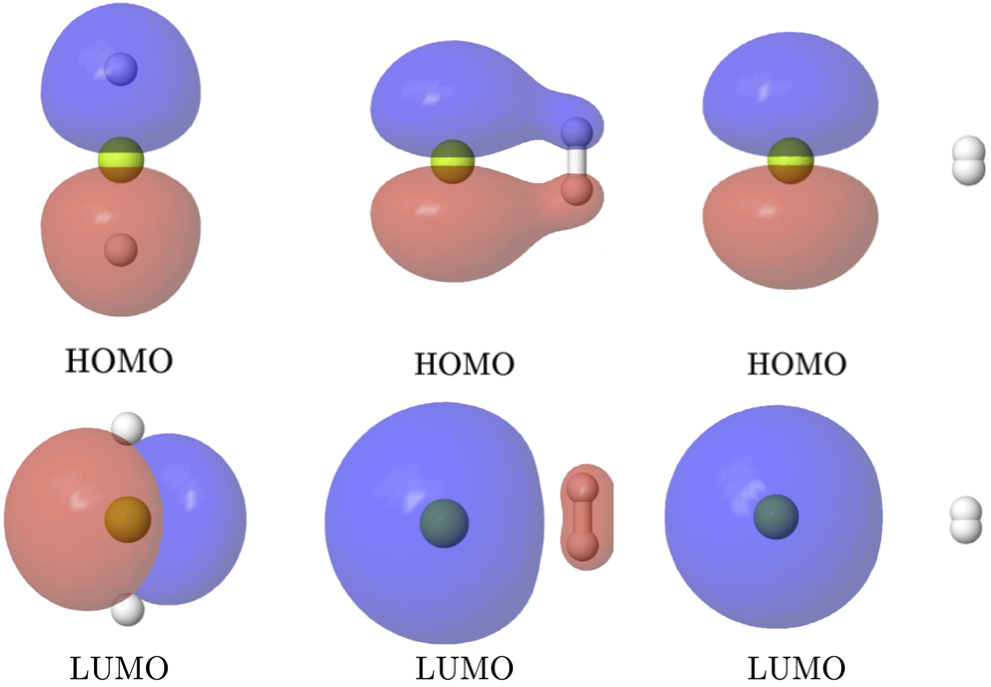}
    \caption{Molecular orbitals along the Be insertion coordinate in BeH$_2$. The top row shows the highest occupied molecular orbital (HOMO), the bottom row shows the lowest unoccupied molecular orbital (LUMO). From left to right: fully inserted geometry at $X=0$, intermediate geometry at $X=3.7$, and separated geometry at $X=5.0$. }
    \label{fig:MOsBeH2}
\end{figure}

Despite the intricate valence electronic structure, the beryllium 1s electrons are essentially inert along the pathway. This is reflected in the molecular orbital spectrum, which shows a pronounced energy gap separating the core orbital from the remaining (valence) orbitals, see Figure~\ref{fig:BeH2Energy}. This naturally motivates a $\sigma$-only active space, with the beryllium 1s electrons treated as frozen core and excluded from the correlated treatment. This reduction yields an effective four-electron problem, consistent with the scope of the theory developed in this article.
\begin{figure}[htbp]
    \centering
    \includegraphics[width=0.7\linewidth]{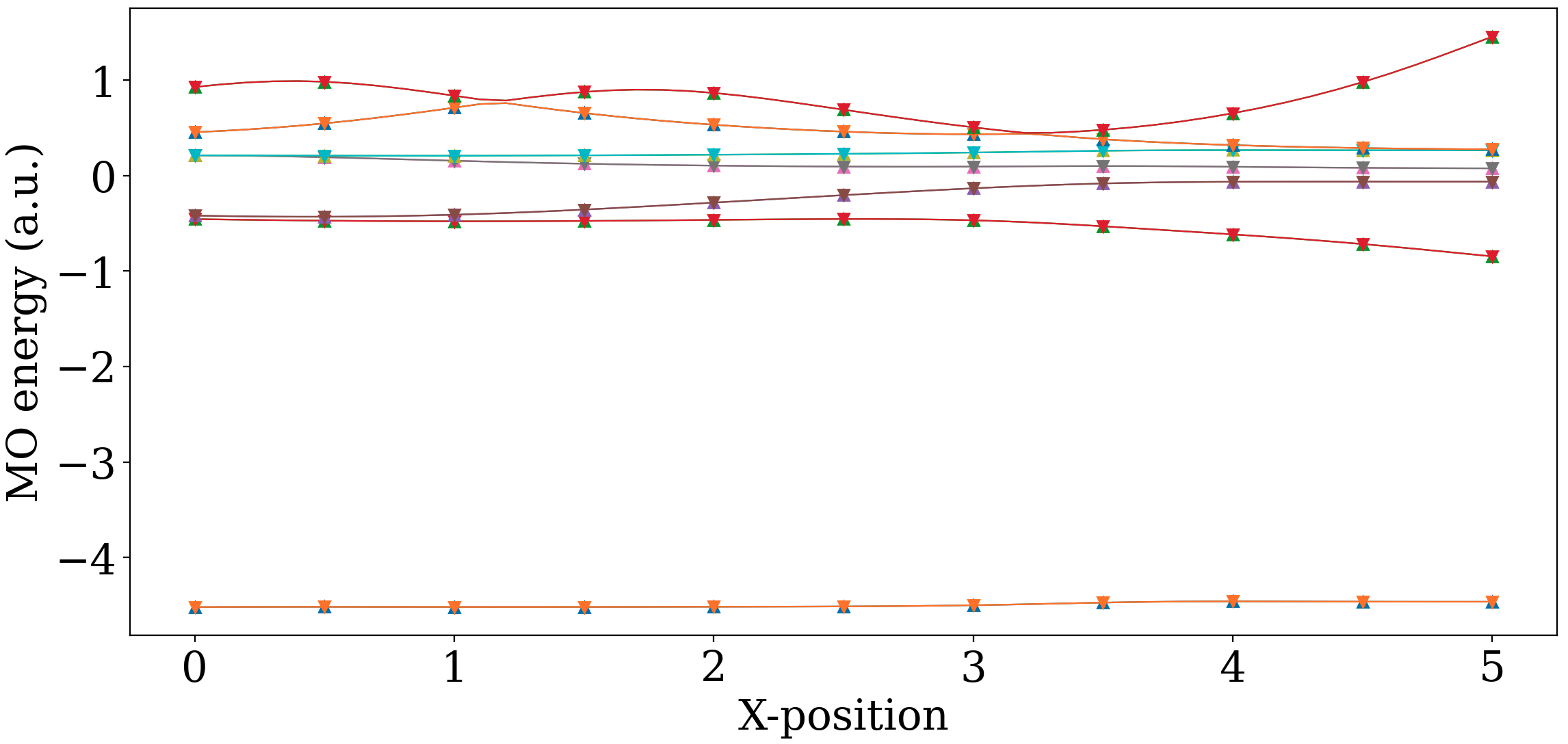}
    \caption{Molecular orbital energies along the beryllium insertion pathway into molecular hydrogen.}
    \label{fig:BeH2Energy}
\end{figure}

We begin our numerical investigations by correlating the four electrons in eight spin orbitals selected as the energetically lowest-lying molecular orbitals in Figure~\ref{fig:BeH2Energy} (excluding the Be 1s). The corresponding Galerkin space is therefore of dimension ${8 \choose 4} = 70$. Diagonalization of the Hamiltonian yields the potential energy curves shown in Figure~\ref{Fig:4Elein8SpO}a, while computing the full CC solution set along the pathway produces the solutions shown in Figure~\ref{Fig:4Elein8SpO}b. We note that the potential energy curves exhibit discontinuities at the geometry where the reference determinants cross. This feature arises because our analysis is performed in a molecular-orbital representation, obtained by transforming the Hamiltonian at each geometry to resolve the local CC-root structure in the vicinity of the crossing.

\begin{figure}[htbp]
    \centering
    \includegraphics[width = 0.46\textwidth]{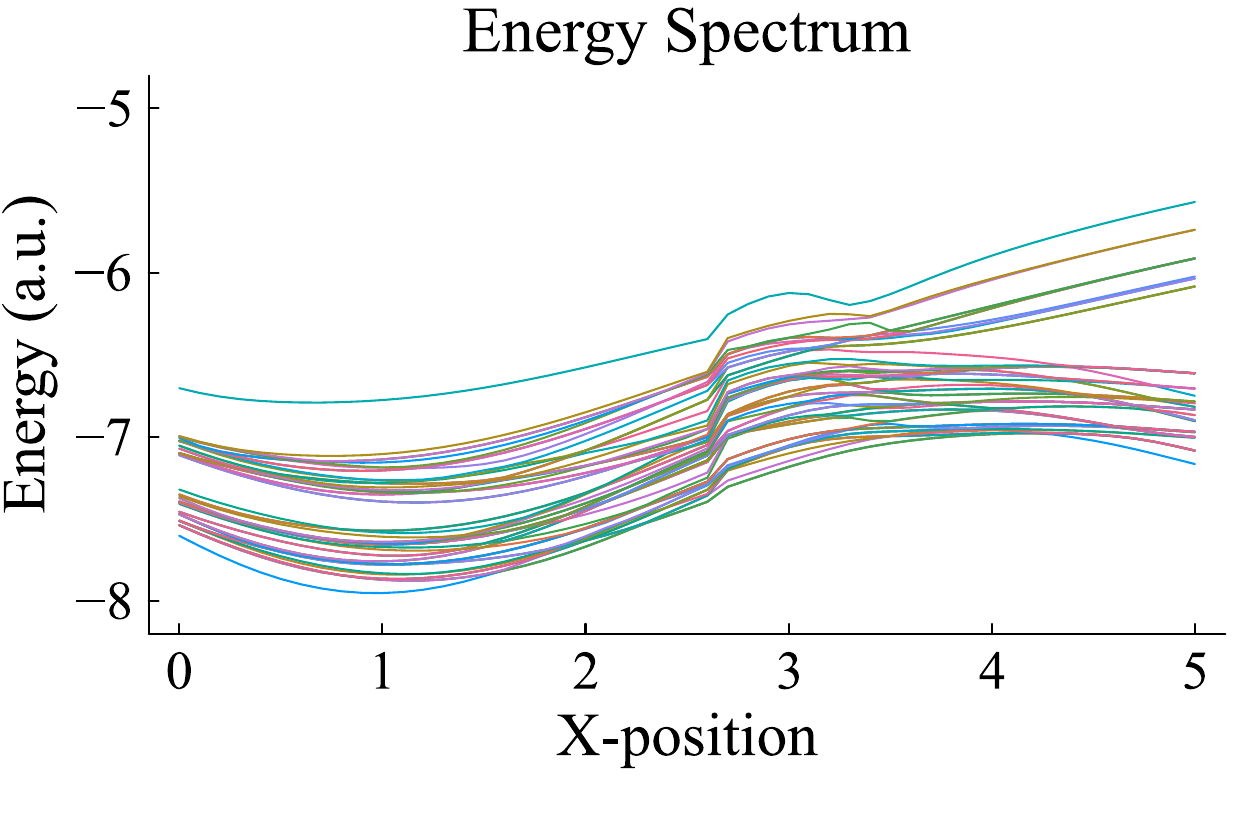}  
    \includegraphics[width = 0.46\textwidth]{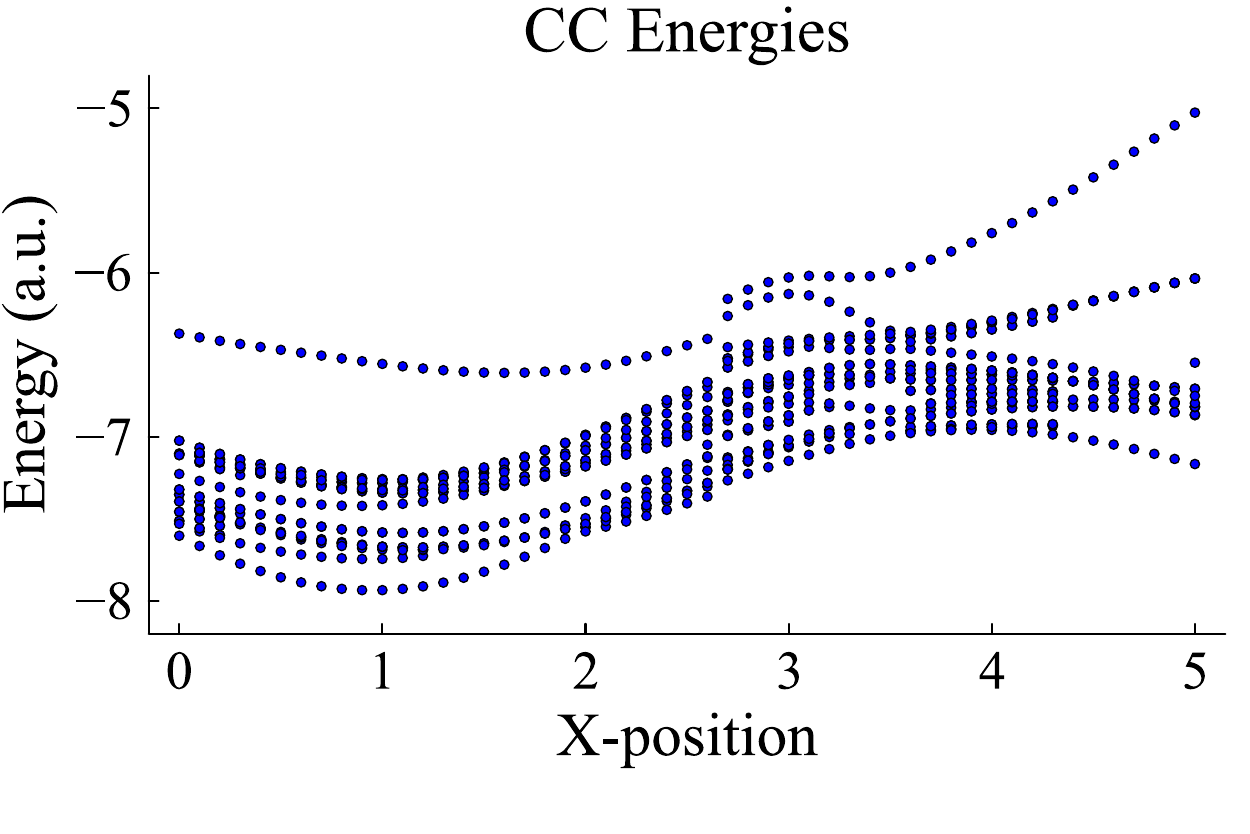}  
    \caption{(a) Full energy spectrum of the Hamiltonian in molecular orbital basis (b) Real-valued CCD energy solutions}
    \label{Fig:4Elein8SpO}
\end{figure}

For a generic Hamiltonian in the four-electron, eight–spin-orbital setting, the associated CC polynomial system admits 73 solutions, see Example~\ref{ex: CC bound}. To track the root count along the reaction pathway, we employ a homotopy continuation strategy that deforms the generic system into the corresponding chemical Hamiltonian. Along this deformation, the total number of solutions typically drops by roughly a factor of 1/2, suggesting that the generic Hamiltonian family effectively overparameterizes the solution structure of the chemical system. Although we solve the polynomial system over $\mathbb{C}$, it is notable that, despite the appearance of complex roots, the corresponding energies are predominantly real, see Figure~\ref{fig:RootCount4in8}. Notably, an increase in the latter two coincides with the state crossings visible in Figure~\ref{Fig:4Elein8SpO}a. This alignment indicates that, in chemically challenging regions, the truncation variety itself becomes more intricate, leading to an expansion and reorganization of the coupled-cluster solution manifold and a corresponding proliferation of roots.  

\begin{figure}[htbp]
    \centering
    \includegraphics[width = 0.45\textwidth]{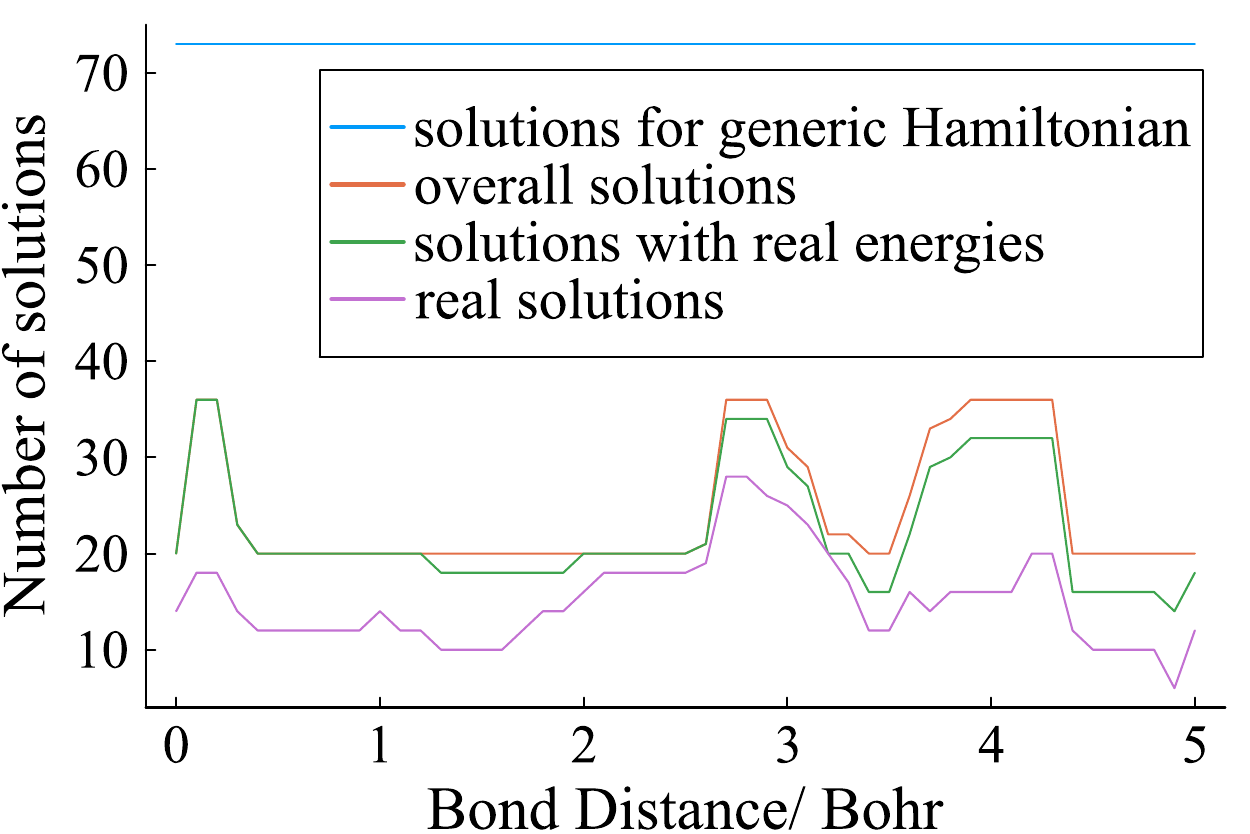}
    \caption{Different CC solution counts along the beryllium insertion pathway into molecular hydrogen.}
    \label{fig:RootCount4in8}
\end{figure}

While the four-electron, eight–spin-orbital case provides a minimal testbed, it is too small to fully resolve the geometry of the truncation variety and the rapid growth in the number of solutions. To probe these effects more clearly, we enlarge the model by including two additional spin orbitals. In this enlarged setting, the corresponding generic system admits 2,523,135 solutions, and the homotopy continuation procedure required to compute the full solution set for a single Hamiltonian takes approximately a week with \verb|Julia| on the server\footnote{2× Intel Xeon Gold 6240 CPUs with 36 cores/72 threads per CPU, 1 TB DDR4 RAM, and NVMe SSD storage.} on 144 threads.
Consequently, we restrict attention to $X=2.5,2.6$, and $2.7$ Bohr to examine root distributions in the vicinity of the first reference crossing. The corresponding solution counts are summarized in Table ~\ref{tab:SolutionCount4in10}.

\begin{table}[htbp]
    \centering
    \begin{tabular}{r|ccc}
    \toprule
        $X$-position   &  2.5 & 2.6 & 2.7 \\
        \midrule
        $N_{\mathrm{gen}}$  &  2,523,135 & 2,523,135 & 2,523,135\\
        \midrule
        $N_{\mathrm{chem}}$ & 16268 &16876 & 17879\\
        $N_{\mathrm{chem}}^{E\in\mathbb{R}}$ & 3637 & 3844 & 3352\\
        $N_{\mathrm{chem}}^{\mathrm{real}}$ & 958 & 965 & 968\\
        \bottomrule
    \end{tabular}
    \caption{CC solution counts: $N_{\mathrm{gen}}$  root count of the generic Hamiltonian; $N_{\mathrm{chem}}$ root count of the corresponding chemical Hamiltonian; $N_{\mathrm{chem}}^{E\in\mathbb{R}}$ the number of roots whose associated energies are real; and $N_{\mathrm{chem}}^{\mathrm{real}}$ number of real-valued roots.}
    \label{tab:SolutionCount4in10}
\end{table}

We note the overall increase in the number of solutions describing the chemical system, which is consistent with the observation in Figure~\ref{fig:RootCount4in8}. Interestingly, the number of solutions that yield valued-energies (which is a physical requirement) strongly decreases when transitioning from $X = 2.6$ to $X=2.7$, where it reaches its minimum among the three geometries considered. At $X=2.7$, the ground and first excited state (see Figure~\ref{Fig:4Elein8SpO}a) cross and so do the reference determinants. In earlier work~\cite{faulstich2024coupled}, we observed that as the reference state deteriorates, measured by its overlap with the targeted eigenstate, the number of solutions with complex energies increases. While the former study was based on a simplified model, we observe the same behavior here, confirming the behavior for physical systems. A complementary and somewhat surprising feature is that the number of solutions with real-valued amplitudes, $N_{\mathrm{chem}}^{\mathrm{real}}$, remains nearly constant across all three geometries. Since real amplitudes necessarily yield real energies, this indicates that the loss of real-energy solutions near the crossing is not driven by a disappearance of real roots in amplitude space. Instead, it is the subset of complex-amplitude solutions that yield real energies that shrinks as $R$ increases from 2.6 to 2.7. Hence, the reference crossing destabilizes the complex solutions that are physically admissible at smaller bond lengths. This suggests that while the real-amplitude solution set appears structurally robust, the ability of complex solutions to correspond to physical states is highly sensitive to changes in the reference.

Although the importance of reference quality is well appreciated in computational chemistry, its effect on the global coupled-cluster solution set is poorly understood. In particular, it is not clear why degradation of the reference, for instance near determinant crossings, is accompanied by a substantial change in the root distribution and, specifically, by a reduced fraction of solutions that yield physically admissible real energies.

Lastly, we plot the associated coupled-cluster energies in Figure~\ref{fig:CCDEnergies_4in8}. The figure displays both real- and complex-valued energies of the chemical Hamiltonian in the complex plane. In agreement with previous studies~\cite{faulstich2024algebraic,sverrisdottir2024exploring}, the spectra exhibit pronounced clustering in distinct regions of the energy plane rather than a uniform distribution. The quantitative decrease in $N_{\mathrm{chem}}^{E\in\mathbb{R}}$ is also visible at a qualitative level. As the geometry approaches the reference crossing, the cloud of energy values broadens in the imaginary direction, reflecting the growing proportion of solutions with nonzero imaginary parts. This visual spreading of the spectrum mirrors the reduction in physically admissible, real-energy solutions reported in Table~\ref{tab:SolutionCount4in10}.

\begin{figure}[H]
    \includegraphics[width = 0.32\textwidth]{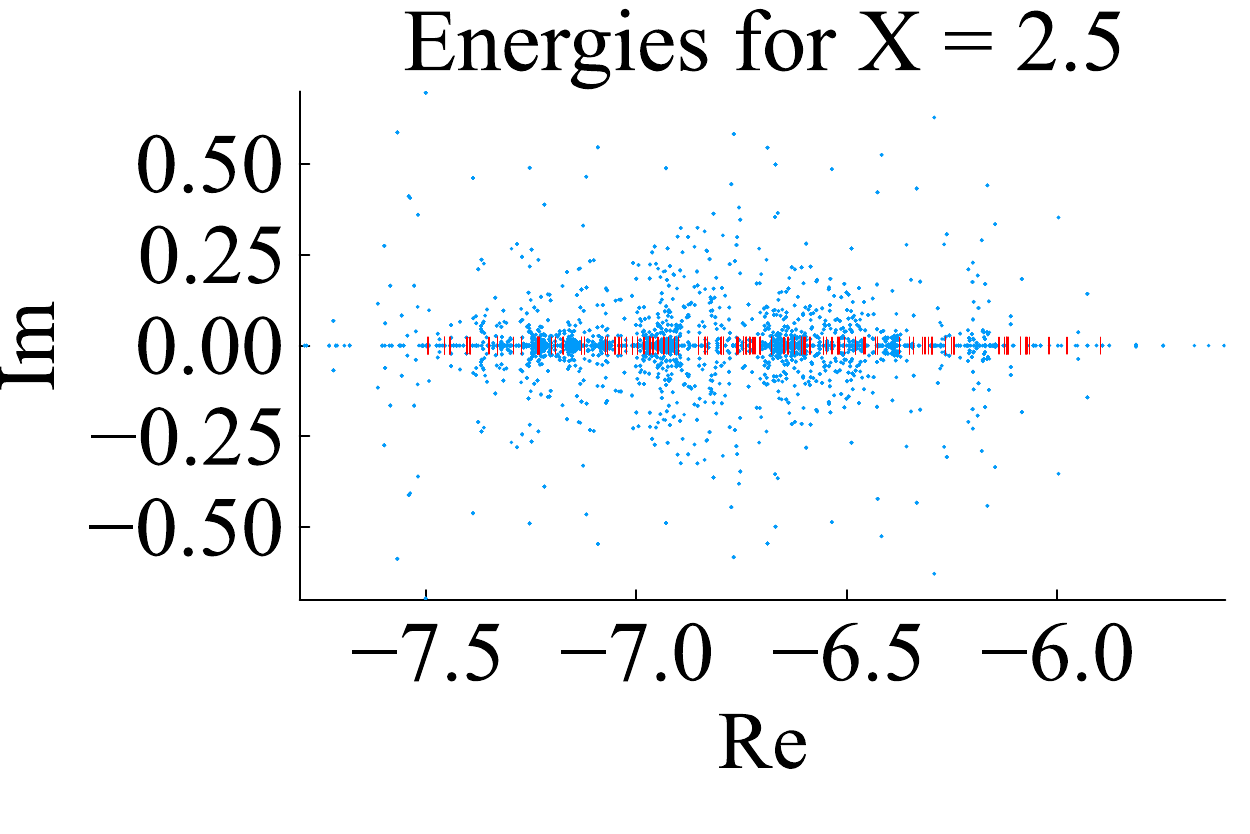} 
    \includegraphics[width = 0.32\textwidth]{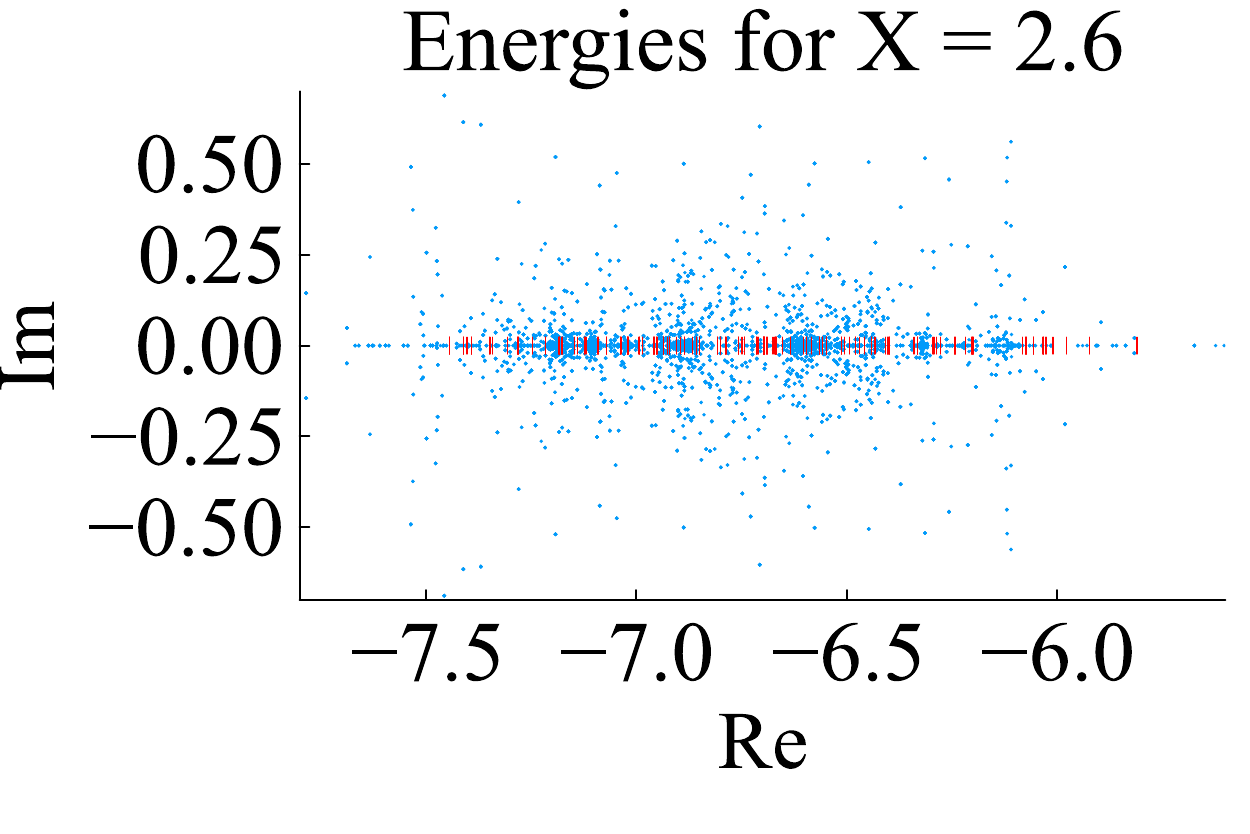}
    \includegraphics[width = 0.32\textwidth]{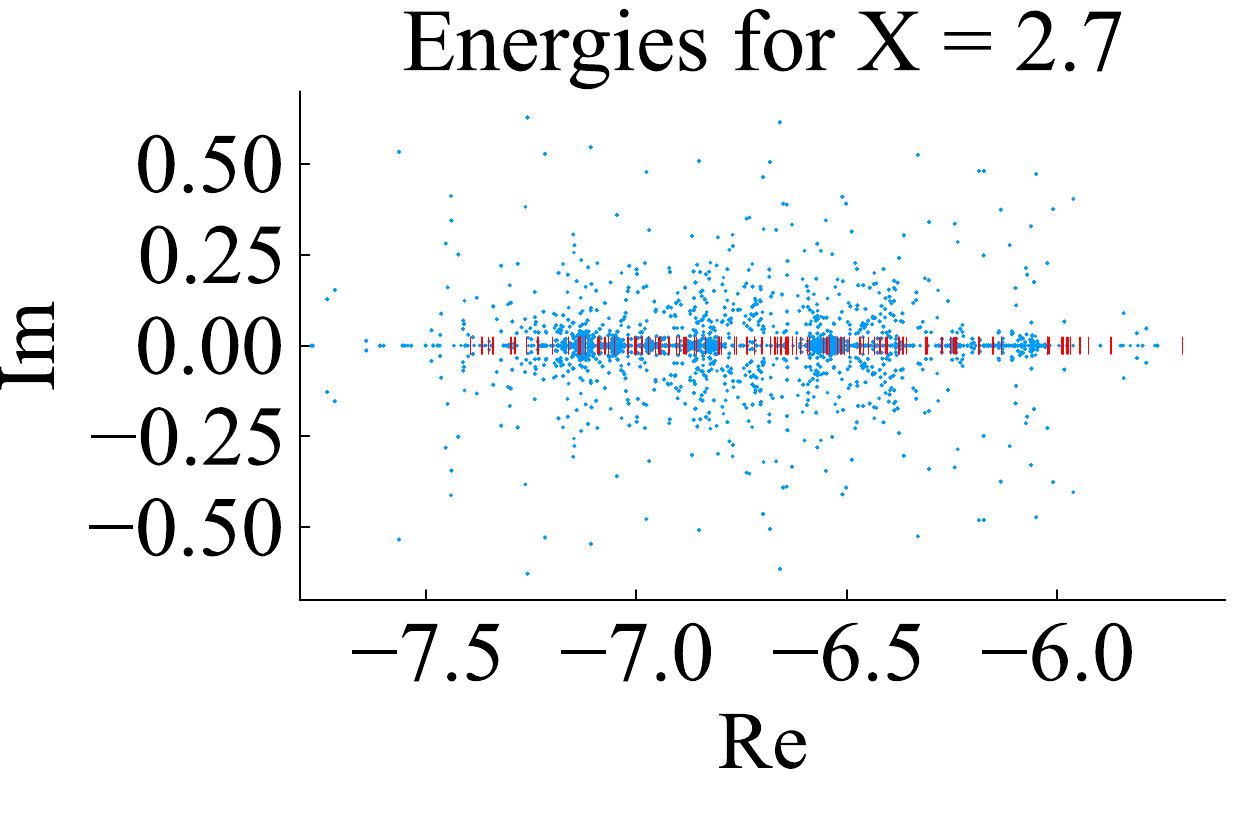}
    \caption{CC (blue dots) and exact (red lines) energies for different $X$-positions. From left to right: $X=2.5$, $X=2.6$ and $X=2.7$. }
    \label{fig:CCDEnergies_4in8}
\end{figure}

We conclude the numerical investigations by emphasizing a conceptual point that appears central for understanding the physically relevant root structure of coupled-cluster theory. From a computational algebraic viewpoint, one can classify coupled-cluster solutions by whether their amplitudes are real or complex. Physically, however, this is not the right dichotomy. What matters is whether a solution, with real or complex amplitudes, yields real observables, here the energy. The observed loss of physically admissible roots near the reference crossing is therefore not simply a change in the real locus of the solution set, but a change in the subset of roots on which the energy map attains real values.

This distinction also highlights an algebraic challenge. Classical algebraic geometry primarily studies the solution sets of polynomial systems over $\mathbb{C}$ or $\mathbb{R}$. Our setting suggests that, for applied algebraic and numerical purposes, an equally important object is the subset of a solution variety that is mapped into $\mathbb{R}$ by a distinguished objective function of interest. In our application, such functions describe physical observables, here exemplified by the energy. This subset can include solutions with genuinely complex amplitudes that nevertheless produce real valued outcomes under an objective function of interest.  

The need to understand such ``real-valued observable'' subsets has been noted repeatedly in applications. In the present chemical setting, this perspective is particularly relevant for explaining how geometric changes along a reaction coordinate, together with deteriorating reference quality, can sharply reduce the number of physically admissible solutions near state crossings. Addressing this phenomenon appears to require tools beyond root counting alone, it calls for a framework that analyzes how a chosen observable partitions the solution set and how this structure evolves as parameters change.

\subsubsection*{Acknowledgements}
We thank Bernd Sturmfels and Svala Sverrisd{\'o}ttir for their valuable inputs during the course of this paper and for their help with the codes, and Fulvio Gesmundo for helpful discussions on invariant theory. We also thank MPI-MiS Leipzig for hosting the workshop \textquotedblleft Nonlinear Algebra meets Quantum Chemistry\textquotedblright \space from which this collaboration started. VG is grateful to MPI-CBG Dresden and CSBD for the excellent working conditions. VG is member of the Italian national group GNSAGA-INdAM.

\newpage
\small
\bibliographystyle{plainnat}
\bibliography{lib.bib}

@article{price2026plane,
  title={Plane partitions and spin adapted quantum states},
  author={Price, Abigail and Stelzer, Ada and Sverrisd{\'o}ttir, Svala},
  journal={arXiv:2601.06295},
  year={2026}
}

@article{faulstich2026algebraic,
  title={Algebraic Geometry for Spin-Adapted Coupled Cluster Theory},
  author={Faulstich, Fabian M. and Sverrisd{\'o}ttir, Svala},
  journal={arXiv:2601.16646},
  year={2026}
}

@article{borovik2025numerical,
  title={Numerical algebraic geometry for energy computations on tensor train varieties},
  author={Borovik, Viktoriia and Friedman, Hannah and Ho{\c{s}}ten, Serkan and Pfeffer, Max},
  journal={arXiv:2512.06939},
  year={2025}
}

@article{harwood2022improving,
  title={Improving the variational quantum eigensolver using variational adiabatic quantum computing},
  author={Harwood, Stuart M. and Trenev, Dimitar and Stober, Spencer T. and Barkoutsos, Panagiotis and Gujarati, Tanvi P. and Mostame, Sarah and Greenberg, Donny},
  journal={ACM Transactions on Quantum Computing},
  volume={3},
  number={1},
  pages={1--20},
  year={2022},
  publisher={ACM New York, NY}
}

@article{pokhilko2025homotopy,
  title={Homotopy continuation method for solving {D}yson equation fully self-consistently: theory and application to {N}d{N}i{O}2},
  author={Pokhilko, Pavel and Zgid, Dominika},
  journal={arXiv:2507.00290},
  year={2025}
}

@article{cances2024mathematical,
  title={A mathematical analysis of {IPT-DMFT}},
  author={Canc{\`e}s, {\'E}ric and Kirsch, Alfred and Perrin-Roussel, Solal},
  journal={arXiv:2406.03384},
  year={2024}
}

@article{gontier2019numerical,
  title={Numerical construction of {W}annier functions through homotopy},
  author={Gontier, David and Levitt, Antoine and Siraj-Dine, Sami},
  journal={Journal of Mathematical Physics},
  volume={60},
  number={3},
  year={2019},
  publisher={AIP Publishing}
}

@article{sverrisdottir2024exploring,
  title={Exploring ground and excited states via single reference coupled-cluster theory and algebraic geometry},
  author={Sverrisd{\'o}ttir, Svala and Faulstich, Fabian M.},
  journal={Journal of Chemical Theory and Computation},
  volume={20},
  number={19},
  pages={8517--8528},
  year={2024},
  publisher={ACS Publications}
}

@article{evangelista2011alternative,
  title={Alternative single-reference coupled cluster approaches for multireference problems: The simpler, the better},
  author={Evangelista, Francesco A.},
  journal={J. Chem. Phys.},
  volume={134},
  number={22},
  pages={224102},
  year={2011},
  publisher={American Institute of Physics}
}

@article{bodenstein2020state,
  title={A state-specific multireference coupled-cluster method based on the bivariational principle},
  author={Bodenstein, Tilmann and Kvaal, Simen},
  journal={J. Chem. Phys.},
  volume={153},
  number={2},
  pages={024106},
  year={2020},
  publisher={AIP Publishing LLC}
}

@article{purvis1983c2v,
  title={{C2V} Insertion pathway for {BeH2}: A test problem for the coupled-cluster single and double excitation model},
  author={Purvis III, George D. and Shepard, Ron and Brown, Franklin B. and Bartlett, Rodney J.},
  journal={Int. J. Quantum Chem.},
  volume={23},
  number={3},
  pages={835--845},
  year={1983},
  publisher={Wiley Online Library}
}

@article{faulstich2023homotopy,
  title={Homotopy continuation methods for coupled-cluster theory in quantum chemistry},
  author={Fabian M. Faulstich and Andre Laestadius},
  journal={Molecular Physics},
  pages={e2258599},
  year={2023},
  publisher={Taylor \& Francis}
}

@article{sun2020recent,
  title={Recent developments in the {PySCF} program package},
  author={Sun, Qiming and Zhang, Xing and Banerjee, Samragni and Bao, Peng and Barbry, Marc and Blunt, Nick S. and Bogdanov, Nikolay A. and Booth, George H. and Chen, Jia and Cui, Zhi-Hao and others},
  journal={The Journal of chemical physics},
  volume={153},
  number={2},
  year={2020},
  publisher={AIP Publishing}
}

@article{sun2018pyscf,
  title={{PySCF}: the {P}ython-based simulations of chemistry framework},
  author={Sun, Qiming and Berkelbach, Timothy C and Blunt, Nick S and Booth, George H. and Guo, Sheng and Li, Zhendong and Liu, Junzi and McClain, James D. and Sayfutyarova, Elvira R. and Sharma, Sandeep and others},
  journal={Wiley Interdisciplinary Reviews: Computational Molecular Science},
  volume={8},
  number={1},
  pages={e1340},
  year={2018},
  publisher={Wiley Online Library}
}

@article{sun2015libcint,
  title={Libcint: An efficient general integral library for {G}aussian basis functions},
  author={Sun, Qiming},
  journal={Journal of computational chemistry},
  volume={36},
  number={22},
  pages={1664--1671},
  year={2015},
  publisher={Wiley Online Library}
}

@article{laestadius2018analysis,
  title={Analysis of the extended coupled-cluster method in quantum chemistry},
  author={Laestadius, Andre and Kvaal, Simen},
  journal={SIAM Journal on Numerical Analysis},
  volume={56},
  number={2},
  pages={660--683},
  year={2018},
  publisher={SIAM}
}

@article{hassan2023analysis,
  title={Analysis of the single reference coupled cluster method for electronic structure calculations: the full-coupled cluster equations},
  author={Hassan, Muhammad and Maday, Yvon and Wang, Yipeng},
  journal={Numerische Mathematik},
  volume={155},
  number={1-2},
  pages={121--173},
  year={2023},
  publisher={Springer}
}

@article{hassan2023analysis2,
  title={Analysis of the Single Reference Coupled Cluster Method for Electronic Structure Calculations: The Discrete Coupled Cluster Equations},
  author={Hassan, Muhammad and Maday, Yvon and Wang, Yipeng},
  journal={arXiv:2311.00637},
  year={2023}
}

@article{jankowski1999physical4,
  title={Physical and mathematical content of coupled-cluster equations. {IV.} {I}mpact of approximations to the cluster operator on the structure of solutions},
  author={Jankowski, Karol and Kowalski, Karol},
  journal={The Journal of Chemical Physics},
  volume={111},
  number={7},
  pages={2952--2959},
  year={1999},
  publisher={American Institute of Physics}
}

@article{jankowski1999physical3,
  title={Physical and mathematical content of coupled-cluster equations. {III.} {M}odel studies of dissociation processes for various reference states},
  author={Jankowski, Karol and Kowalski, Karol},
  journal={The Journal of Chemical Physics},
  volume={111},
  number={7},
  pages={2940--2951},
  year={1999},
  publisher={American Institute of Physics}
}

@article{jankowski1999physical1,
  title={Physical and mathematical content of coupled--cluster equations: Correspondence between coupled--cluster and configuration--interaction solutions},
  author={Jankowski, Karol and Kowalski, Karol},
  journal={The Journal of Chemical Physics},
  volume={110},
  number={8},
  pages={3714--3729},
  year={1999},
  publisher={American Institute of Physics}
}

@article{jankowski1999physical2,
  title={Physical and mathematical content of coupled-cluster equations. {II.} {O}n the origin of irregular solutions and their elimination via symmetry adaptation},
  author={Jankowski, Karol and Kowalski, Karol},
  journal={The Journal of Chemical Physics},
  volume={110},
  number={19},
  pages={9345--9352},
  year={1999},
  publisher={American Institute of Physics}
}

@article{kowalski1998full,
  title={Full solution to the coupled-cluster equations: the {H}4 model},
  author={Kowalski, Karol  and Jankowski, Karol},
  journal={Chemical Physics Letters},
  volume={290},
  number={1-3},
  pages={180--188},
  year={1998},
  publisher={Elsevier}
}

@article{paldus1993application,
  title={Application of {H}ilbert-space coupled-cluster theory to simple {(H$_2$)$_2$} model systems: Planar models},
  author={Paldus, Josef and Piecuch, Piotr and Pylypow, Leszek and Jeziorski, Bogumil},
  journal={Physical Review A},
  volume={47},
  number={4},
  pages={2738},
  year={1993},
  publisher={APS}
}

@article{piecuch1990coupled,
  title={Coupled-cluster approaches with an approximate account of triexcitations and the optimized-inner-projection technique. {II.} Coupled-cluster results for cyclic-polyene model systems},
  author={Piecuch, Piotr and Zarrabian, Sohrab and Paldus, Josef and {\v{C}}{\'\i}{\v{z}}ek, Ji{\v{r}}{\'\i}},
  journal={Physical Review B},
  volume={42},
  number={6},
  pages={3351},
  year={1990},
  publisher={APS}
}

@article{csirik2023disc,
	author = {Mih{\'a}ly A. Csirik and Andre Laestadius},
	title = {Coupled-Cluster theory revisited - Part {I}: {D}iscretization},
	journal = {ESAIM: Mathematical Modelling and Numerical Analysis},
	year = 2023,
	volume = 57,
	number = 2,
	pages = {645-670}
}

@article{csirik2023coupled,
  title={Coupled-Cluster theory revisited-Part {II:} {A}nalysis of the single-reference Coupled-Cluster equations},
  author={Mih{\'a}ly A. Csirik and Andre Laestadius},
  journal={ESAIM: Mathematical Modelling and Numerical Analysis},
  volume={57},
  number={2},
  pages={545--583},
  year={2023},
  publisher={EDP Sciences}
}

@article{laestadius2019coupled,
  title={The coupled-cluster formalism--a mathematical perspective},
  author={Laestadius, Andre and Faulstich, Fabian M.},
  journal={Molecular Physics},
  volume={117},
  number={17},
  pages={2362--2373},
  year={2019},
  publisher={Taylor \& Francis}
}

@inproceedings{breiding2018homotopycontinuation,
  title={HomotopyContinuation. jl: A package for homotopy continuation in {J}ulia},
  author={Breiding, Paul and Timme, Sascha},
  booktitle={Mathematical Software--ICMS 2018: 6th International Conference, South Bend, IN, USA, July 24-27, 2018, Proceedings 6},
  pages={458--465},
  year={2018},
  organization={Springer}
}

@article{faulstich2024coupled,
  title={Coupled Cluster Theory: Toward an Algebraic Geometry Formulation},
  author={Faulstich, Fabian M. and Oster, Mathias},
  journal={SIAM Journal on Applied Algebra and Geometry},
  volume={8},
  number={1},
  pages={138--188},
  year={2024},
  publisher={SIAM}
}

@incollection{piecuch2000search,
  author    = "Piecuch, Piotr and Kowalski, Karol",
  title     = "In search of the relationship between multiple solutions characterizing coupled-cluster theories",
  booktitle = "Computational Chemistry, Reviews of Current Trends",
  publisher = "",
  pages={1--104},
  year      = "2000",
  editor    = "J. Leszczynski"
}

@article{kowalski2000complete2,
  title={Complete set of solutions of multireference coupled-cluster equations: The state-universal formalism},
  author={Kowalski, Karol and Piecuch, Piotr},
  journal={Physical Review A},
  volume={61},
  number={5},
  pages={052506},
  year={2000},
  publisher={APS}
}

@article{kowalski1998towards,
  title={Towards complete solutions to systems of nonlinear equations of many-electron theories},
  author={Kowalski, Karol and Jankowski, Karol},
  journal={Physical Review Letters},
  volume={81},
  number={6},
  pages={1195},
  year={1998},
  publisher={APS}
}

@article{kowalski2000complete,
  title={Complete set of solutions of the generalized {B}loch equation},
  author={Kowalski, Karol and Piecuch, Piotr},
  journal={International Journal of Quantum Chemistry},
  volume={80},
  number={4-5},
  pages={757--781},
  year={2000},
  publisher={Wiley Online Library}
}

@Article{vcivzek1966correlation,
	author={{\v{C}}{\'\i}{\v{z}}ek, Ji{\v{r}}{\'\i}},
	title={On the Correlation Problem in Atomic and Molecular Systems. calculation of wavefunction components in Ursell-type expansion using quantum-field theoretical methods},
	journal={The Journal of Chemical Physics},
	volume={45},
	number={11},
	pages={4256--4266},
	year={1966},
	publisher={AIP}
}

@Article{schneider2009analysis,
	author={Schneider, Reinhold},
	title={Analysis of the Projected Coupled Cluster Method in Electronic Structure Calculation},
	journal={Numerische Mathematik},
	volume={113},
	number={3},
	pages={433--471},
	year={2009},
	publisher={Springer}
}

@Article{rohwedder2013continuous,
	author={Rohwedder, Thorsten},
	title={The Continuous Coupled Cluster Formulation for the Electronic {S}chr{\"o}dinger Equation},
	journal={ESAIM: Mathematical Modelling and Numerical Analysis},
	volume={47},
	number={2},
	pages={421--447},
	year={2013},
	publisher={EDP Sciences}
}

@Article{rohwedder2013error,
	author={Rohwedder, Thorsten and Schneider, Reinhold},
	title={Error Estimates for the Coupled Cluster Method},
	journal={ESAIM: Mathematical Modelling and Numerical Analysis},
	volume={47},
	number={6},
	pages={1553--1582},
	year={2013},
	publisher={EDP Sciences}
}

@Book{helgaker2014molecular,
	title={Molecular Electronic-Structure Theory},
	author={Helgaker, Trygve and J\"orgensen, Poul and Olsen, Jeppe},
	year={2014},
	doi = {},
	publisher={John Wiley \& Sons}
}

@article{faulstich2019analysis,
  title={Analysis of the tailored coupled-cluster method in quantum chemistry},
  author={Faulstich, Fabian M. and Laestadius, Andre and Legeza, {\"O}rs and Schneider, Reinhold and Kvaal, Simen},
  journal={SIAM Journal on Numerical Analysis},
  volume={57},
  number={6},
  pages={2579--2607},
  year={2019},
  publisher={SIAM}
}

@article{vzivkovic1978analytic,
	title={Analytic connection between configuration-interaction and coupled-cluster solutions},
	author  = {Tomislav P. {\v{Z}}ivkovi{\'c} and Hendrik J. Monkhorst},
	journal={Journal of Mathematical Physics},
	volume={19},
	number={5},
	pages={1007--1022},
	year={1978},
	publisher={AIP}
}

@article{faulstich2024recent,
  title={Recent mathematical advances in coupled cluster theory},
  author={Faulstich, Fabian M.},
  journal={International Journal of Quantum Chemistry},
  volume={124},
  number={13},
  pages={e27437},
  year={2024},
  publisher={Wiley Online Library}
}

@article{gesmundo2025collineation,
    author={Gesmundo, Fulvio and Keneshlou, Hanieh},
    title={Collineation varieties of tensors},
    journal={Collectanea Mathematica},
    year={2025},
    pages={1-21}
}

@book{fulton2013representation,
  title={Representation theory: a first course},
  author={Fulton, William and Harris, Joe},
  volume={129},
  year={2013},
  publisher={Springer Science \& Business Media}
}

@article{pauli1925zusammenhang,
  title={{\"U}ber den {Z}usammenhang des {A}bschlusses der {E}lektronengruppen im {A}tom mit der {K}omplexstruktur der {S}pektren},
  author={Pauli, Wolfgang},
  journal={Zeitschrift f{\"u}r Physik},
  volume={31},
  number={1},
  pages={765--783},
  year={1925},
  publisher={Springer}
}

@article{schrodinger1926undulatory,
  title={An undulatory theory of the mechanics of atoms and molecules},
  author={Schr{\"o}dinger, Erwin},
  journal={Physical review},
  volume={28},
  number={6},
  pages={1049},
  year={1926},
  publisher={APS}
}

@misc{LiE,
author={van Leeuwen, Marc A. A. and Cohen, Arjeh M. and Lisser, Bert},
title ={{LiE}, A Package for {L}ie Group Computations},
journal={Computer Algebra Nederland, Amsterdam}, 
ISBN={90-74116-02-7, 1992 },
howpublished="\url{http://www-math.univ-poitiers.fr/~maavl/LiE/}",
}

@article{sverrisdottir2025algebraic,
  title={Algebraic Varieties in Second Quantization},
  author={Sverrisd{\'o}ttir, Svala},
  journal={arXiv:2505.17276},
  year={2025}
}

@article{kalman2025,
  title={Nonlinear {K}alman varieties},
  author={Flavio Salizzoni and Luca Sodomaco and Julian Weigert},
  journal={arXiv:2512.16540v1},
  year={2025}
}

@misc{Faulstich2025SupplementaryCode,
  author       = {Faulstich, Fabian M. and Galgano, Vincenzo and Neuhaus, Elke and Portakal, Irem},
  title        = {Supplementary code for ``{T}he {C}oupled {C}luster {D}oubles {T}runcation {V}ariety of {F}our {E}lectrons"},
  year         = {2026},
  journal    = {Zenodo},
  doi          = {10.5281/zenodo.17912500},
  howpublished="\url{https://doi.org/10.5281/zenodo.17912500}"
}

@article{faulstich2024algebraic,
  title={Algebraic varieties in quantum chemistry},
  author={Faulstich, Fabian M. and Sturmfels, Bernd and Sverrisd{\'o}ttir, Svala},
  journal={Foundations of Computational Mathematics},
  pages={1--32},
  year={2024},
  publisher={Springer}
}

@article{borovik2024coupled,
title = {Coupled cluster degree of the {G}rassmannian},
journal = {Journal of Symbolic Computation},
volume = {128},
pages = {102396},
year = {2025},
issn = {0747-7171},
author = {Viktoriia Borovik and Bernd Sturmfels and Svala Sverrisdóttir}
}

@book{landsberg2011tensors,
  title={Tensors: geometry and applications: geometry and applications},
  author={Landsberg, Joseph M},
  volume={128},
  year={2011},
  publisher={American Mathematical Soc.}
}

@article{brysiewicz2023lawrence,
  title={{L}awrence lifts, matroids, and maximum likelihood degrees},
  author={Brysiewicz, Taylor and Maraj, Aida},
  journal={Algebraic Statistics},
  volume={16},
  number={2},
  pages={217--242},
  year={2025},
  publisher={Mathematical Sciences Publishers}
}

@article{brysiewicz2019degree,
author = {Brysiewicz, Taylor and Gesmundo, Fulvio},
title = {The degree of {S}tiefel manifolds},
journal = {Enumerative Combinatorics and Applications},
volume = {1},
number = {3},
year = {2021}
}

@book{shafarevich2013basic,
  title={Basic Algebraic Geometry 1: Varieties in Projective Space},
  author={Shafarevich, Igot R. and Reid, Miles},
  isbn={9783642379567},
  series={SpringerLink~:~B{\"u}cher},
  year={2013},
  publisher={Springer Berlin Heidelberg}
}

@book{harris1992algebraic,
  title={Algebraic Geometry: A First Course},
  author={Harris, Joe},
  isbn={9780387977164},
  lccn={96105844},
  series={Graduate Texts in Mathematics},
  year={1992},
  publisher={Springer}
}

\end{document}